\newcommand{\mC}{\mathcal{C}}
\newcommand{\mB}{\mathcal{B}}
\newcommand{\mP}{\mathcal{P}}
\newtheorem{theorem}{Theorem}[section]
\newtheorem{lemma}[theorem]{Lemma}
\newtheorem{claim}[theorem]{Claim}
\newtheorem{definition}[theorem]{Definition}
\newtheorem{problem}[theorem]{Problem}
\newtheorem{proposition}[theorem]{Proposition}
\title{Disperse Hypergraphs}
\author{Lior Gishboliner\thanks{Department of Mathematics, University of Toronto, Canada.
\emph{Email}: \href{mailto:lior.gishboliner@utoronto.ca}{\tt lior.gishboliner@utoronto.ca}. Research supported by the NSERC Discovery Grant ``Problems in Extremal and Probabilistic Combinatorics".
}
\and Ethan Honest\thanks{Department of Mathematics, University of Toronto, Canada.
\emph{Email}: \href{mailto:lior.gishboliner@utoronto.ca}{\tt ethan.honest@mail.utoronto.ca}.}
}
\begin{document}

\maketitle

\begin{abstract}
    For $\ell \geq 3$, an $\ell$-uniform hypergraph is {\em disperse} if the number of edges induced by any set of $\ell+1$ vertices is 0, 1, $\ell$ or $\ell+1$. We show that every disperse $\ell$-uniform hypergraph on $n$ vertices contains a clique or independent set of size $n^{\Omega_{\ell}(1)}$, answering a question of the first author and Tomon. To this end, we prove several structural properties of disperse hypergraphs. 
\end{abstract}

\section{Introduction}

The Erd\H{o}s-Hajnal conjecture \cite{EH_conjecture} is a fundamental problem in extremal graph theory, stating that for every fixed graph $H$, every induced $H$-free $n$-vertex graph has a homogeneous set (i.e., a clique or independent set) of size at least $n^{c_H}$, where $c_H > 0$ is a constant depending only on $H$. This is in sharp contrast to general graphs, which may only have homogeneous sets of size $O(\log n)$. Despite significant recent progress (see, e.g., \cite{BNSS,NSS_VC,NSS_P5} and the references therein), the Erd\H{o}s-Hajnal conjecture remains open in general.

It is natural to ask for analogues of the Erd\H{o}s-Hajnal conjecture for hypergraphs. We will use the term {\em $\ell$-graph} as shorthand for $\ell$-uniform hypergraph. 
Let $\log_k(x)$ denote the $k$-times iterated logarithm, i.e., $\log_0(x) = x$, $\log_1(x) = \log(x)$ and $\log_{k}(x) = \log(\log_{k-1}(x))$. 
Let $h_\ell(n)$ denote the maximum size of a homogeneous set guaranteed to exist in every $\ell$-vertex $n$-graph. 
It is well-known \cite{Erdos_Rado} that $h_{\ell}(n) \geq \left( \log_{\ell-1}(n)\right)^{\Omega(1)}$, and it is conjectured that the number $\ell-1$ of iterated logarithms is best possible, i.e.,
$h_{\ell}(n) \leq \left( \log_{\ell-1}(n)\right)^{O(1)}$.
Thus, a natural analogue of the Erd\H{o}s-Hajnal conjecture for $\ell$-graphs would be that for every fixed $\ell$-graph $H$, every induced $H$-free $n$-vertex $\ell$-graph has a homogeneous set of size 
$\left( \log_{\ell-2}(n) \right)^{c_H}$. 
However, there is some evidence that this might not be true for every $H$. Indeed, it is known that for uniformity $\ell \geq 4$, the so-called {\em stepping up construction} gives a tight lower bound for Ramsey numbers in terms of the number of iterated logarithms (though this number is not known, as for the critical case $\ell=3$ we only know that the number of logarithms is at least 1 and at most 2; this corresponds to at least $\ell-2$ and at most $\ell-1$ logarithms for uniformity $\ell$). Conlon, Fox and Sudakov \cite{CFS_EH} showed that the stepping up construction avoids certain $H$ as induced subgraphs. Thus, for $\ell \geq 4$, there exist $\ell$-graphs $H$ such that induced $H$-free $\ell$-graphs do not have significantly larger homogeneous sets than general $\ell$-graphs. Therefore, if $h_{\ell}(n) \leq \left( \log_{\ell-1}(n)\right)^{O(1)}$ (as conjectured), then the above $\ell$-uniform analogue of the Erd\H{o}s-Hajnal conjecture fails. 
See also \cite{AST,Rodl_Schacht} for additional Erd\H{o}s-Hajnal-type results for hypergraphs.

Recently, there has been some interest in variants of the Erd\H{o}s-Hajnal problem where one forbids {\em order-size pairs} instead of induced subgraphs $H$. 
To the best of our knowledge, this setting was first considered in \cite{MS_ordersize}.
Let us define the problem. For a set $Q$ of pairs of integers, we say that an $\ell$-graph $G$ is {\em $Q$-free} if for every $(m,f) \in Q$, there is no set of $m$ vertices in $G$ which induces exactly $f$ edges. What can we say about the size of homogeneous sets in $Q$-free $\ell$-graphs? It was recently shown by Arnold, the first author, and Sudakov \cite{AGS} that this problem does in fact satisfy the natural hypergraph analogue of the Erd\H{o}s-Hajnal conjecture, in the sense that for every $\ell \geq 2$, all but a finite number\footnote{It is conjectured in \cite{AGS} that these exceptions are not necessary and the result in fact holds for every non-empty $Q$.} of the sets $Q \neq \emptyset$ satisfy that every $Q$-free $\ell$-graph on $n$ vertices contains a homogeneous set of size at least
$\left( \log_{\ell-2}(n) \right)^{c_Q}$. 

The papers \cite{ABGMW,GT} studied the special case where $\ell=3$ and $Q$ consists of pairs of the form $(4,f)$; i.e., we forbid certain numbers of edges on vertex-sets of size four. The first author and Tomon \cite{GT} showed that if an $n$-vertex 3-graph $G$ has no four vertices spanning exactly two edges, then $G$ has a homogeneous set of size $n^c$ (where $c > 0$ is an absolute constant). They also asked to show that this extends to $\ell$-graphs, in the following sense: If an $n$-vertex $\ell$-graph $G$, with $\ell \geq 3$, has no $\ell+1$ vertices spanning $2,3,4,\dots,\ell-1$ edges, then $G$ has a homogeneous set of size $n^{c_{\ell}}$. Here we prove this conjecture. We call $\ell$-graphs with this property {\em disperse}. 
 
\begin{definition}
    Let $G$ be an $\ell$-graph, $\ell \geq 3$. We say that $G$ is disperse if, for every $(\ell + 1)$-set $X = \{v_1, \dots, v_{\ell + 1}\} \subseteq V(G)$, it holds that $e_G(X) \in \{0, 1, \ell, \ell + 1\}$. 
\end{definition}
\noindent
Note that if $G$ is disperse then so is its complement $\overline{G}$.

\begin{theorem}\label{thm:EH}
    Let $G$ be a disperse $\ell$-graph on $n$ vertices. Then, $\max(\alpha(G), \omega(G)) \geq \Omega(n^{\frac{1}{3\ell - 1}})$.
\end{theorem}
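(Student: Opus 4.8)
Throughout I use that $\overline{G}$ is disperse whenever $G$ is and that $\alpha(\overline{G})=\omega(G)$, so it suffices to produce one homogeneous set of the stated size and we may replace $G$ by $\overline{G}$ at will. The plan is an induction on the uniformity $\ell$. The base case $\ell=3$ is handled directly (it is implied by the theorem of the first author and Tomon, and is in any case the $\ell=3$ instance of the structural analysis below); the inductive step reduces a disperse $\ell$-graph to the link of a single vertex, a disperse $(\ell-1)$-graph, at the cost of passing to a polynomially large induced subhypergraph — which is exactly why the exponent degrades from $\tfrac1{3\ell-1}$ to $\tfrac1{3(\ell-1)-1}=\tfrac1{3\ell-4}$.

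The first ingredient is that \emph{links of disperse hypergraphs are disperse}. For $v\in V(G)$ let $G_v$ be the $(\ell-1)$-graph on $V(G)\setminus\{v\}$ with edge set $\{S\in\binom{V(G)\setminus\{v\}}{\ell-1}: S\cup\{v\}\in E(G)\}$. For every $\ell$-set $Y\subseteq V(G)\setminus\{v\}$ one has $e_G(Y\cup\{v\})=e_{G_v}(Y)+\mathbf{1}[Y\in E(G)]$, since the $\ell$-subsets of $Y\cup\{v\}$ are $Y$ together with the $\ell$ sets $S\cup\{v\}$, $S\in\binom{Y}{\ell-1}$. As $e_G(Y\cup\{v\})\in\{0,1,\ell,\ell+1\}$ and $e_{G_v}(Y)\le\ell$, this forces $e_{G_v}(Y)\in\{0,1,\ell-1,\ell\}$, i.e.\ $G_v$ is disperse; moreover it gives the sharper \emph{coupling} that $Y\notin E(G)$ implies $e_{G_v}(Y)\in\{0,1,\ell\}$ and $Y\in E(G)$ implies $e_{G_v}(Y)\in\{0,\ell-1,\ell\}$. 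This coupling between membership in $E(G)$ and the local structure of $G_v$ is what will let a homogeneous set of $G_v$ be lifted back to $G$.

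The second ingredient is a family of \emph{all-or-nothing} lemmas. For an $(\ell-1)$-set $S$ write $N(S)=\{u\notin S: S\cup\{u\}\in E(G)\}$; applying dispersity to the $(\ell+1)$-set $S\cup\{u,v\}$ shows, for example, that if $u\in N(S)$ and $v\notin N(S)$ then the $\ell-1$ sets $(S\setminus\{s\})\cup\{u,v\}$, $s\in S$, are either all edges or all non-edges, with analogous dichotomies when both or neither of $u,v$ lie in $N(S)$. Using these I would prove the main structural dichotomy: a disperse $\ell$-graph $G$ on $n$ vertices either (a) has a clique or independent set of size $\Omega(n)$ — this collects the degenerate situations such as $G$ or $\overline{G}$ having a bounded-size cover of its edge set, a vertex in all edges, or a part-separating partition — or (b) admits a vertex $v$ and a set $W\subseteq V(G)\setminus\{v\}$ with $|W|\ge\Omega\!\left(n^{(3\ell-4)/(3\ell-1)}\right)$ such that, via the coupling above, any homogeneous set of the disperse $(\ell-1)$-graph $G_v[W]$ yields a homogeneous set of $G[W]$ of the same order. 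In case (b) the induction hypothesis gives a homogeneous set of $G_v[W]$ of size $\Omega\!\left(|W|^{1/(3\ell-4)}\right)=\Omega\!\left(n^{1/(3\ell-1)}\right)$; in case (a) we are done outright; any intermediate ``still disperse on a large part'' situation just recurses on that part, which only helps. This closes the induction.

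The hard part is establishing case (b): converting the purely local coupling between $G$ and $G_v$ into a single large set $W$ on which homogeneity genuinely transfers, while losing only a polynomial factor in $|W|$. The obstruction to a naive transfer is the two ambiguous link-patterns, $\ell$-sets $Y\subseteq W$ with $e_{G_v}(Y)\in\{0,\ell\}$, for which $G_v[W]$ does not determine whether $Y\in E(G)$; one must show that after discarding a bounded number of ``bad'' vertices, or after iterating the construction over a short list $v_1,\dots,v_k$ and intersecting the resulting neighbourhood classes, no such pattern survives inside $W$ (equivalently, $G_v[W]$ avoids both an independent $\ell$-set and a complete $\ell$-set, so a short Ramsey argument applies), or else that the surviving ambiguity is itself structured enough to land in case (a). The number of cleaning rounds required, together with the complementation step, is what produces the constant $3$ (and the $-1$) in $\tfrac1{3\ell-1}$.
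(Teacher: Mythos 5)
Your first ingredient is fine: the identity $e_G(Y\cup\{v\})=e_{G_v}(Y)+\mathbf{1}[Y\in E(G)]$ does show links of disperse hypergraphs are disperse (this is Lemma \ref{lem:links are disperse} in the paper). But the heart of your argument --- the dichotomy whose case (b) produces a vertex $v$ and a set $W$ of size $\Omega(n^{(3\ell-4)/(3\ell-1)})$ on which ``any homogeneous set of $G_v[W]$ yields a homogeneous set of $G[W]$ of the same order'' --- is asserted, not proved, and your own closing paragraph concedes that this is ``the hard part,'' offering three mutually inconsistent sketches for how it might go. The obstruction you name is real and is not removable by the devices you list: if $S$ is homogeneous in the link $G_v$, then \emph{every} $\ell$-subset $Y\subseteq S$ has $e_{G_v}(Y)\in\{0,\ell\}$, which is exactly the ambiguous regime where the coupling constrains $\mathbf{1}[Y\in E(G)]$ not at all. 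So a homogeneous set of the link tells you nothing about $G$ restricted to it; $G[S]$ is merely another disperse $\ell$-graph of the \emph{same} uniformity, and recursing on it does not close your induction on $\ell$. The suggestion that ``a short Ramsey argument applies'' once $G_v[W]$ avoids both a complete and an empty $\ell$-set cannot rescue this: hypergraph Ramsey bounds are iterated-logarithmic, and in any case a disperse link can certainly contain both cliques and independent sets of size $\ell$. Finally, even granting case (b), your base case invokes the Gishboliner--Tomon theorem with an unspecified constant $c$, so the induction would not actually deliver the exponent $\frac{1}{3\ell-1}$.

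For comparison, the paper does not induct on $\ell$ at the level of the main theorem. It first proves two structural facts about disperse $\ell$-graphs: either $G$ or $\overline{G}$ fails to be tightly connected (Theorem \ref{thm:not tightly connected}), and every tight component equals $\binom{U}{\ell-1}$ for some vertex set $U$ (Theorem \ref{thm:tight components are cliques-1}); the induction on $\ell$ via links happens only inside the proofs of these structural statements. It then runs a partition algorithm that repeatedly splits any large part along a tight component, collects all crossing edges and all $\ell$-subsets of the final small parts into a ``bad'' hypergraph $\mB$, bounds $|\mB|$ by $O(n^{(3\ell^2-3\ell+2)/(3\ell-1)})$ using Lemma \ref{bound-for-c-1}, applies Spencer's bound to find a $\mB$-independent set $U$ of size $\Omega(n^{2/(3\ell-1)})$ on which $G[U]$ is a cohypergraph, and finishes with the $\sqrt{|U|}$ homogeneous-set bound for cohypergraphs. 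You would need an argument of comparable substance in place of your case (b) before this proposal could be considered a proof.
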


The constant $\frac{1}{3\ell-1}$ in Theorem \ref{thm:EH} is best possible up to a factor of roughly $\frac{1}{3}$. Indeed, note that if an $\ell$-graph $G$ has no two edges intersecting in $\ell-1$ vertices, then $G$ is disperse (because every $(\ell+1)$-set of vertices spans 0 or 1 edges). Such $\ell$-graphs are known as {\em partial $(\ell,\ell-1)$-Steiner systems}. It is known \cite{Rodl_Sinajova} that there exist $n$-vertex partial $(\ell,\ell-1)$-Steiner systems with independence number 
$\tilde{O}(n^{\frac{1}{\ell-1}})$. Hence, the constant in Theorem \ref{thm:EH} cannot be improved beyond $\frac{1}{\ell-1}$. It is also worth noting that in the case $\ell=3$, the constant $\frac{1}{8}$ given by Theorem \ref{thm:EH} significantly improves the constant obtained in \cite{GT}, which was much smaller and therefore not calculated explicitly. It would be interesting to determine the best possible constant in Theorem \ref{thm:EH}. For $\ell=3$, the best upper bound on this constant comes from \cite{ABGMW}, which constructed a disperse $n$-vertex $3$-graph $G$ with $\max(\alpha(G), \omega(G)) \leq \tilde{O}(n^{1/3})$.\footnote{In fact, this construction avoids not only four vertices spanning two edges, but also four vertices spanning three edges.}

As mentioned above, one example of disperse $\ell$-graphs is partial $(\ell,\ell-1)$-Steiner systems. Let us now describe another important example. A hypergraph $G$ is called a {\em cohypergraph} if either $|V(G)| = 1$, or there is a vertex partition $V(G) = X \cup Y$ such that $G[X],G[Y]$ are cohypergraphs and $E(G)$ contains all or none of the $\ell$-sets which intersect both $X$ and $Y$. It is easy to check (by induction) that every cohypergraph is disperse. Also, note that for $\ell=2$, this recovers the definition of {\em cographs}. Recall also that a graph is a cograph if and only if it is induced $P_4$-free, where $P_4$ is the path with 4 vertices. 

To prove Theorem \ref{thm:EH}, we show (implicitly) that every disperse $\ell$-graph is close to being a cohypergraph. To this end, we prove several structural properties of disperse hypergraphs. Let us now state the two key facts we will use, Theorems \ref{thm:not tightly connected} and \ref{thm:tight components are cliques-1}. 
We first introduce the basic definitions related to tight connectivity. 
\begin{definition}
    Let $G$ be an $\ell$-graph.
    A {\em tight walk} is a sequence of edges
    $e_1, \dots, e_m \in E(G)$ such that $|e_i \cap e_{i + 1}| \geq \ell - 1$ for all $1 \leq i \leq m - 1$. 
    For $A,B \in \binom{V(G)}{\ell-1}$, we say that $A,B$ are connected in $G$ if there is a tight walk $e_1,\dots,e_m$ with $A \subseteq e_1$ and $B \subseteq e_m$. This is an equivalence relation.\footnote{Indeed, let $e_1,\dots,e_m$ be a tight walk with $A \subseteq e_1, B \subseteq e_m$, and let $f_1,\dots,f_k$ be a tight walk with $B \subseteq e_1, C \subseteq f_k$. Then $e_1,\dots,e_m,f_1,\dots,f_k$ is a tight walk connecting $A$ and $C$.}. A {\em tight component} of $G$ is an equivalence class in this relation. Thus, the tight components partition $\binom{V(G)}{\ell-1}$. We say that $G$ is {\em tightly connected} if there is a single tight component, namely $\binom{V(G)}{\ell-1}$.
\end{definition}
In other words, $G$ is tightly connected if for every $A,B \in \binom{V(G)}{\ell-1}$, there is a tight walk $e_1,\dots,e_m$ with $A \subseteq e_1$ and $B \subseteq e_m$. Note that in the case $\ell=2$, this coincides with the usual notion of graph connectivity. 

For a hypergraph $G$, we use $\overline{G}$ to denote the complement of $G$. Our first theorem states that for a disperse $G$, either $G$ or $\overline{G}$ is not tightly connected. Our second theorem states that in a disperse $G$, each tight component is a complete hypergraph, i.e., it consists of all $(\ell-1)$-tuples contained in some vertex set.


\begin{theorem}\label{thm:not tightly connected}
    Let $G$ be a disperse $\ell$-graph. Then $G$ or $\overline{G}$ is not tightly connected.
\end{theorem}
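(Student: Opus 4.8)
Throughout, suppose for contradiction that both $G$ and $\overline{G}$ are tightly connected; since the statement is immediate or degenerate when $n\le\ell$, assume $n\geq\ell+1$. A first observation is that each of $G,\overline{G}$ then has at least two edges, and in particular every $(\ell-1)$-set lies both in an edge of $G$ and in a non-edge of $G$ (the tight-adjacency graph on $E(G)$ — vertex set $E(G)$, with $e,e'$ adjacent when $|e\cap e'|=\ell-1$ — is connected, and likewise for $\overline{G}$). The engine of the proof is the following local rigidity: if $e,e'$ are distinct edges with $|e\cap e'|=\ell-1$, then the $(\ell+1)$-set $e\cup e'$ carries at least two edges, hence — since $\ell\geq 3$ and $G$ is disperse, so $e_G(e\cup e')\notin\{2,\dots,\ell-1\}$ — at least $\ell$ of them; call such an $(\ell+1)$-set \emph{heavy} and, dually, \emph{light} if it has at most one edge. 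Thus tight connectivity of $G$ produces a heavy $(\ell+1)$-set and tight connectivity of $\overline{G}$ a light one, and from a heavy set one reads off an edge whose ``completeness'' can be grown.

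The plan is then to \emph{propagate} density and sparsity along tight walks and show the two are incompatible. Concretely, let $W$ be a maximal vertex set with $\binom{W}{\ell}\subseteq E(G)$ and $|W|\geq\ell$. If $W=V(G)$ then $G$ is complete, so $\overline{G}$ has no edges and is not tightly connected (as $n\ge\ell+1$), a contradiction. Otherwise, for $v\notin W$ consider the link $L_v=\{S\in\binom{W}{\ell-1}: S\cup\{v\}\in E(G)\}$. Disperseness forces $|L_v\cap\binom{T}{\ell-1}|\in\{0,\ell-1,\ell\}$ for every $T\in\binom{W}{\ell}$, since $T\cup\{v\}$ is an $(\ell+1)$-set containing the edge $T$ and so is heavy or has only the edge $T$. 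Combined with the connectivity of $\binom{W}{\ell-1}$ under $(\ell-2)$-intersections, this ``all-or-nearly-all on every $\ell$-set'' rigidity should force each nonempty $L_v$ to be large and coherent; feeding in tight connectivity of $G$ (which lets a tight walk reach $v$ from inside $W$) should then yield either a strictly larger complete set, contradicting maximality of $W$, or enough structure around $W$ to exhibit a tight component of $\overline{G}$ other than $\binom{V(G)}{\ell-1}$ (e.g.\ an $(\ell-1)$-set with full $G$-link, hence in no edge of $\overline{G}$), contradicting tight connectivity of $\overline{G}$. The symmetric run with $G$ and $\overline{G}$ exchanged keeps the dichotomy symmetric, as it must be.

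I expect the propagation step to be the real obstacle, requiring a careful multi-case analysis rather than a one-line flooding argument, because density is genuinely not monotone under tight connectivity: adding a single edge to the $\ell$-graph of all $\ell$-sets through a fixed vertex keeps the hypergraph disperse and tightly connected while creating $(\ell+1)$-sets with exactly one edge, and the complement of the $3$-graph of all collinear triples of a projective plane is disperse and tightly connected yet has no $(\ell-1)$-set with full link. So the right bookkeeping is to maintain an expanding family of complete sets — equivalently, of $(\ell-1)$-sets with full link — certified by tight walks, invoking the rigidity of $L_v$ at each step, and to classify the structurally distinct ways the process can terminate, showing in each case that tight connectivity of $\overline{G}$ (via either a failed covering of $(\ell-1)$-sets by edges, or a disconnected tight-adjacency graph) fails. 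Getting this classification right, and in particular handling the Steiner-system-like configurations in which completeness never fully propagates, is where the main work lies; once a terminal configuration is reached, the contradiction is immediate.
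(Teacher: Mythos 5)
Your write-up is a plan, not a proof: the decisive step is explicitly left open. You reduce the theorem to showing that, starting from a maximal complete set $W$ and using the local rigidity $|L_v\cap\binom{T}{\ell-1}|\in\{0,\ell-1,\ell\}$ for $T\in\binom{W}{\ell}$, one can either grow $W$ or exhibit a disconnection of $\overline{G}$ --- but you only assert that the rigidity ``should force'' coherence of $L_v$ and that tight connectivity ``should then yield'' the dichotomy, and you yourself flag the propagation/classification step as ``where the main work lies.'' That step is precisely the content of the theorem; without it nothing has been proved. The difficulty is real: the local constraint on a single $\ell$-subset $T$ of $W$ does not by itself control how the traces $L_v\cap\binom{T}{\ell-1}$ fit together across different $T$, nor how a tight walk of $G$ that leaves $W$ and returns interacts with $W$, and your own examples (the star plus one edge, Steiner-like configurations) show that neither ``full link'' nor ``empty link'' propagates naively. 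Also, a heavy $(\ell+1)$-set need not be complete (it may miss one edge), so even the initial seed $W$ may be just a single edge, and the maximality of $W$ gives you very little to push against.

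For contrast, the paper avoids any global flooding argument entirely: it proves that links of disperse hypergraphs are disperse, then proves the key transfer statement that if $G$ is disperse and tightly connected then every link $L(v)$ is tightly connected (this rests on a lemma shortening tight walks of length $3$ to length $2$, which is where disperseness is really exploited), and finally inducts on the uniformity $\ell$ down to the base case $\ell=3$, where links are cographs and a cograph or its complement is disconnected. If you want to salvage your approach, the honest comparison is that your ``rigidity of $L_v$ on every $T$'' observation is a weaker, unorganized version of what the paper's walk-shortening lemma extracts; to complete your argument you would essentially have to reprove that lemma in order to control how edges leaving $W$ connect back, at which point the paper's link induction is the cleaner packaging.
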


\begin{theorem}\label{thm:tight components are cliques-1}
    Let $G$ be a disperse $\ell$-graph. Then for every tight component $\mC$ of $G$, there is $U \subseteq V(G)$ such that $\mC = \binom{U}{\ell-1}$.
\end{theorem}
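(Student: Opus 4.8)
The plan is to prove the stronger assertion that $\mC = \binom{U}{\ell-1}$, where $U := \bigcup_{A \in \mC} A$. The inclusion $\mC \subseteq \binom{U}{\ell-1}$ is immediate, so all the work is in showing $\binom{U}{\ell-1} \subseteq \mC$. Call an edge $f \in E(G)$ \emph{$\mC$-full} if some (equivalently, every) $(\ell-1)$-subset of $f$ lies in $\mC$; the equivalence holds because $\binom{f}{\ell-1}$ is contained in a single tight component. If $\mC = \{A\}$ is a singleton then $U = A$ and $\binom{U}{\ell-1} = \{A\} = \mC$, so assume $\mC$ is nontrivial; then every member of $\mC$ lies in a $\mC$-full edge, $U$ is exactly the union of the $\mC$-full edges, and any two $\mC$-full edges are joined by a tight walk through $\mC$-full edges. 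I would argue by induction on $|V(G)|$. If $U \subsetneq V(G)$, restrict to $G[U]$, which is again disperse: the set $\mC$ is contained in some tight component $\mC^*$ of $G[U]$, and $\bigcup \mC^* = U = V(G[U])$, so by the induction hypothesis $\mC^* = \binom{U}{\ell-1}$; since every tight walk of $G[U]$ is a tight walk of $G$ we also have $\mC^* \subseteq \mC$, hence $\mC = \binom{U}{\ell-1}$. Thus one reduces to the case $U = V(G)$, where the goal becomes to show that $G$ is tightly connected. By Theorem~\ref{thm:not tightly connected}, either $G$ is tightly connected — in which case we are done — or $G$ has at least two tight components, and then there is a tight component $\mC' \neq \mC$ with $\bigcup \mC' \subseteq V(G) = \bigcup \mC$, so it suffices to derive a contradiction from this configuration.

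The engine would be the following consequence of dispersity: if an $(\ell+1)$-set $Y$ contains two distinct edges, then (as $\ell \geq 3$) $e_G(Y) \in \{\ell, \ell+1\}$, so \emph{at most one} $\ell$-subset of $Y$ is a non-edge; moreover any edge containing a member of $\mC$ is $\mC$-full, hence all of its $(\ell-1)$-subsets lie in $\mC$. I would then pick an $(\ell-1)$-set $B \notin \mC$ (for instance $B \in \mC'$) for which $t := \max_{A \in \mC} |A \cap B|$ is as large as possible, and fix $A^* \in \mC$ with $|A^* \cap B| = t$; note $t \leq \ell - 2$. The key case is $t = \ell-2$, where $B = (A^* \setminus \{a\}) \cup \{b\}$ for some $a \in A^*$ and $b \notin A^*$. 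Choosing a $\mC$-full edge $f^* = A^* \cup \{x\}$ (here $x \neq b$, else $B \subseteq f^*$ and $B \in \mC$) and applying the engine to $Y := f^* \cup \{b\} = A^* \cup \{x,b\}$, which already contains the edge $f^*$: if $Y$ contained a second edge, then at most one of its $\ell$-subsets would be a non-edge, but $B$ is an $(\ell-1)$-subset of the two distinct $\ell$-subsets $A^* \cup \{b\}$ and $(A^* \setminus \{a\}) \cup \{x,b\}$, and if either is an edge then it is $\mC$-full — the first since it contains $A^* \in \mC$, the second since it contains the $(\ell-1)$-subset $(A^* \setminus \{a\}) \cup \{x\}$ of $f^*$ — whence $B \in \mC$, a contradiction. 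So $Y$ can contain no second edge, i.e.\ $e_G(f^* \cup \{b\}) = 1$.

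The hard part, and the main obstacle, is precisely this degenerate configuration $e_G(f^* \cup \{b\}) = 1$, in which dispersity yields no constraint, together with the case $t < \ell-2$. I expect the degenerate case to be handled by passing to the complement: $e_G(f^* \cup \{b\}) = 1$ forces $e_{\overline{G}}(f^* \cup \{b\}) = \ell$, and $\overline{G}$ is again disperse, so the engine \emph{is} available on the complement side; alternatively, one can trace a tight walk of $\mC$-full edges from a $\mC$-full edge containing $b$ (one exists since $b \in V(G) = U$) toward $f^*$ and analyze the first $\mC$-full edge along this walk that omits $b$, which has the form $S' \cup \{b\}$ with $S' \in \mC$. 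It seems likely that the cleanest route is to run the entire argument as a single induction on $|V(G)|$ yielding Theorems~\ref{thm:not tightly connected} and~\ref{thm:tight components are cliques-1} simultaneously, using the dichotomy of Theorem~\ref{thm:not tightly connected} on proper induced subhypergraphs both to dispose of the degenerate configuration and to reduce the case $t < \ell-2$ to the case $t = \ell-2$. Pushing this final case analysis through is where the real content lies.
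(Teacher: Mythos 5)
Your setup is sound and matches the paper's: the paper also takes $U$ to be the union of the members of $\mC$ and derives a contradiction from a counterexample via a maximality argument, and your reduction to the case $\bigcup \mC = V(G)$ (via induction on $|V(G)|$, using that tight walks between members of $\mC$ stay inside $U$) is valid. Your "engine" and the non-degenerate subcase of $t=\ell-2$ are also correctly argued. But the proof has a genuine gap, which you yourself flag: the degenerate configuration $e_G(f^*\cup\{b\})=1$ and the entire case $t<\ell-2$ are left open, and these are not loose ends --- they are the whole content of the theorem. Raw dispersity applied to a single $(\ell+1)$-set cannot close them, because when $e_G(Y)\le 1$ the disperse condition imposes no constraint at all, and the two escape routes you sketch do not obviously work: passing to $\overline{G}$ is a dead end since the tight components of $\overline{G}$ bear no relation to those of $G$ (the relevant walks live in $G$, not $\overline{G}$), and "analyzing the first $\mC$-full edge along a walk that omits $b$" is exactly the kind of step that needs a quantitative handle on walk lengths to go through.

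The paper supplies that handle with machinery absent from your proposal. The key input is Lemma \ref{length-2-lemma-1}: in a disperse $\ell$-graph, if two $(\ell-1)$-sets meeting in $\ell-2$ vertices are tightly connected at all, they are connected by a walk of length at most $2$. This rests on the link analysis (Lemmas \ref{tight-link-lemma-1} and \ref{main-lemma-1}, reducing to the cograph structure of iterated links). Once a connecting walk has length $2$, its union is an $(\ell+1)$-set and your "at most one non-edge" engine applies, giving Lemma \ref{dec-27-lemma-1} (if two $(\ell-1)$-subsets of an $\ell$-set $B$ lie in $\mC$, then all of $\binom{B}{\ell-1}$ does), and then an induction along arbitrary tight walks (Lemma \ref{dec-27-lemma-2-1}) shows that \emph{all} $(\ell-1)$-subsets of the union of a tight walk lie in one component --- which is what kills both your degenerate configuration and the case $t<\ell-2$ in one stroke. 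Without the walk-shortening step, or some substitute for it, the case analysis you describe cannot be pushed through, so as written the proposal does not constitute a proof.
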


Combining these two theorems, we conclude that for every disperse $\ell$-graph $G$, either in $G$ or in $\overline{G}$ we can find a vertex set $\emptyset \neq X \subsetneq V(G)$ such that there is no edge having $\ell-1$ vertices in $X$ and one vertex in $Y := V(G) \setminus X$. Indeed, we simply take $X$ to be the vertex set corresponding to a tight component (via Theorem \ref{thm:tight components are cliques-1}).
One can then show, using the fact that $G$ is disperse, that there are only few edges intersecting both $X$ and $Y$. 
Repeating this inside $X$ and $Y$ (with some technicalities omitted here), we can show that $G$ is close to a cohypergraph. 

Theorems \ref{thm:not tightly connected} and \ref{thm:tight components are cliques-1} are proved in Section \ref{sec:structural}. We then prove Theorem \ref{thm:EH} in Section \ref{sec:EH}. Section \ref{sec:concluding} contains some further remarks and open problems. We mostly use standard graph-theoretic notation. For a vertex-set $X$ in a hypergraph $G$, we use $e_G(X)$ to denote the number of edges of $G$ contained in $X$. Throughout the rest of the paper, we assume that the uniformity $\ell$ is at least 3, unless explicitly stated otherwise.

\section{Structural Results}\label{sec:structural}



In this section we study the structure of disperse hypergraphs, and in particular prove Theorems \ref{thm:not tightly connected} and \ref{thm:tight components are cliques-1}.
Let us begin with some lemmas. 
For an $\ell$-graph $G$ and $v \in V(G)$, the {\em link} $L(v)$ is the $(\ell-1)$-graph on $V(G) \setminus \{v\}$ with edge set 
$\{ e \in \nolinebreak \binom{V(G) \setminus \{v\}}{\ell-1} : e \cup \{v\} \in E(G) \}$. A useful fact is that the links of a disperse hypergraph are also disperse.

\begin{lemma}\label{lem:links are disperse}
    Let $G$ be a disperse $\ell$-graph. Then for every $v \in V(G)$, $L(v)$ is disperse.
\end{lemma}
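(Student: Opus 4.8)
The plan is to express, for a fixed vertex $v$ and a fixed $\ell$-set $Y \subseteq V(G) \setminus \{v\}$, the quantity $e_{L(v)}(Y)$ in terms of $e_G(Y \cup \{v\})$, and then to read off the conclusion from the dispersity of $G$ applied to the $(\ell+1)$-set $Y \cup \{v\}$. Recall that $L(v)$ is an $(\ell-1)$-graph, so it is disperse precisely when every $\ell$-set of its vertices spans a number of edges in $\{0,1,\ell-1,\ell\}$; thus it suffices to prove $e_{L(v)}(Y) \in \{0,1,\ell-1,\ell\}$ for every such $Y$.

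First I would split the edges of $G$ lying inside $X := Y \cup \{v\}$ into those through $v$ and those avoiding $v$. An edge through $v$ has the form $e \cup \{v\}$ with $e \in \binom{Y}{\ell-1}$, and by definition of the link it lies in $E(G)$ if and only if $e \in E(L(v))$; so there are exactly $e_{L(v)}(Y)$ of them. An edge of $G$ inside $X$ avoiding $v$ is an $\ell$-subset of the $\ell$-set $Y$, hence it must be $Y$ itself. Therefore
\[
 e_G(X) \;=\; e_{L(v)}(Y) + \mathbf{1}[\,Y \in E(G)\,].
\]

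To conclude, apply dispersity of $G$, which gives $e_G(X) \in \{0,1,\ell,\ell+1\}$. If $Y \notin E(G)$, then $e_{L(v)}(Y) = e_G(X) \in \{0,1,\ell,\ell+1\}$, but also trivially $e_{L(v)}(Y) \le \binom{\ell}{\ell-1} = \ell$, so $e_{L(v)}(Y) \in \{0,1,\ell\}$. If $Y \in E(G)$, then $e_{L(v)}(Y) = e_G(X) - 1 \in \{-1,0,\ell-1,\ell\}$, and nonnegativity forces $e_{L(v)}(Y) \in \{0,\ell-1,\ell\}$. Either way $e_{L(v)}(Y) \in \{0,1,\ell-1,\ell\}$, which is exactly the dispersity condition for $L(v)$.

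I do not expect any real obstacle here: the argument is a one-line counting identity followed by a two-case check, and the only subtlety is remembering the extra bound $e_{L(v)}(Y) \le \ell$ needed to exclude the value $\ell+1$ in the first case. (If $\ell = 3$ the statement is in any case immediate, since every graph satisfies the $\ell = 2$ dispersity condition because a $3$-set spans at most $3$ edges, and the identity above still applies.)
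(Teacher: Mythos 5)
Your proof is correct and follows essentially the same approach as the paper: both arguments relate $e_{L(v)}(Y)$ to $e_G(Y\cup\{v\})$ (the paper phrases it as a contradiction, observing that $2\le e_{L(v)}(Y)\le \ell-2$ would force $2\le e_G(Y\cup\{v\})\le \ell-1$, while you write out the counting identity and do the direct two-case check). Your version is slightly more explicit but the content is identical.
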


\begin{proof}
    Suppose that there is $v \in G$ such that $L(v)$ is not disperse, and let $X = \{v_1, \dots, v_{\ell}\} \subseteq V \setminus \{v\}$ witness this, namely, 
    $2 \leq |e_{L(v)}(X)| \leq \ell - 2$. 
    Then $2 \leq e_G(X \cup \{v\}) \leq \ell-1$, contradicting that $G$ is disperse. 
\end{proof}
\noindent
We will also use the following easy lemma.
\begin{lemma}\label{lem:2-edge tight walk}
    Let $G$ be a disperse $\ell$-graph, let $f,g \in E(G)$ with $|f \cap g| \geq \ell-1$, let $v \in f$ and let $A,B \subseteq g \setminus \{v\}$ with $|A| = |B| = \ell-2$. Then $A,B$ are connected in $L(v)$. 
\end{lemma}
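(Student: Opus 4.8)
The plan is to split on whether the chosen vertex $v$ lies in $g$ or not. If $v \in g$, then $g = (g \setminus \{v\}) \cup \{v\} \in E(G)$ shows that $g \setminus \{v\}$ is an edge of the link $L(v)$; since both $A$ and $B$ are contained in $g \setminus \{v\}$, the one-edge tight walk $e_1 = g \setminus \{v\}$ already witnesses that $A$ and $B$ are connected in $L(v)$, and we are done. So the only real work is the case $v \notin g$.

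Assume now $v \notin g$. Since $v \in f \setminus g$ and $|f| = \ell$, we get $|f \cap g| \le \ell - 1$, hence $|f \cap g| = \ell - 1$, $f \setminus \{v\} = f \cap g \subseteq g$, and $|f \cup g| = \ell + 1$. Write $g = \{w_1, \dots, w_\ell\}$. The \emph{key step} is to apply dispersity to the $(\ell+1)$-set $X := f \cup g$: since $f$ and $g$ are two distinct edges of $G$ contained in $X$, we have $e_G(X) \ge 2$, and therefore $e_G(X) \in \{\ell, \ell + 1\}$. The $\ell + 1$ many $\ell$-subsets of $X$ are $g = X \setminus \{v\}$ together with $X \setminus \{w_i\} = (g \setminus \{w_i\}) \cup \{v\}$ for $i \in [\ell]$, and at most one of them is a non-edge. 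As $g \in E(G)$, it follows that there is an index set $I \subseteq [\ell]$ with $|I| \ge \ell - 1$ such that $(g \setminus \{w_i\}) \cup \{v\} \in E(G)$, and hence $g \setminus \{w_i\}$ is an edge of $L(v)$, for every $i \in I$.

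To finish, observe that the edges $\{g \setminus \{w_i\} : i \in I\}$ of $L(v)$ are pairwise tightly adjacent, since $g \setminus \{w_i\}$ and $g \setminus \{w_j\}$ meet in the $(\ell - 2)$-set $g \setminus \{w_i, w_j\}$; as $L(v)$ is an $(\ell-1)$-graph, this means they all lie in one tight component of $L(v)$. Since $v \notin g$, both $A$ and $B$ are $(\ell - 2)$-subsets of $g$, and an $(\ell - 2)$-subset $A$ of $g$ is contained in exactly two $(\ell - 1)$-subsets of $g$, namely $g \setminus \{x\}$ for the two elements $x \in g \setminus A$; since $|[\ell] \setminus I| \le 1$, at least one of these two is of the form $g \setminus \{w_i\}$ with $i \in I$, hence an edge of $L(v)$ in the tight component identified above. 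Picking such edges $e_A \supseteq A$ and $e_B \supseteq B$, the tight walk $e_A, e_B$ (of length at most $2$) connects $A$ and $B$ in $L(v)$. I expect the only point that needs care is this final counting step: one must combine the bound "at most one $\ell$-subset of $X$ is a non-edge" with the fact that $A$ (respectively $B$) is covered by exactly two $(\ell-1)$-subsets of $g$, to guarantee that a valid covering edge of $L(v)$ survives; everything else is immediate.
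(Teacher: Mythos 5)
Your proof is correct and follows essentially the same route as the paper: handle $v\in g$ with the one-edge walk $g\setminus\{v\}$, and otherwise apply dispersity to the $(\ell+1)$-set $f\cup g$ to find edges through $v$ covering $A$ and $B$, whose links form a tight walk of length at most $2$. The extra enumeration of the $\ell$-subsets of $f\cup g$ and the two-covers of $A$ is just a more explicit version of the paper's "misses at most one edge" argument.
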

\begin{proof}
    If $v \in g$ then the assertion clearly holds, because $g \setminus \{v\}$ is a (one-edge) tight walk between $A$ and $B$ in $L(v)$. So suppose that $v \notin g$. Then we can write $f = \{v,v_1,\dots,v_{\ell-1}\}, g = \{v_1,\dots,v_{\ell-1},w\}$ with $w \neq v$. As $G$ is disperse, the $(\ell+1)$-set $f \cup g$ must miss at most one edge. This means that there are $e_1,e_2 \in E(G)$ with $\{v\} \cup A \subseteq e_1$ and $\{v\} \cup B \subseteq e_2$. Now $e_1 \setminus \{v\},e_2 \setminus \{v\}$ is a tight walk between $A$ and $B$ in $L(v)$.  
\end{proof}

The following is our main technical lemma. It shows that in a disperse hypergraph, if there is a tight walk of length 3 from a vertex $v$ to an $(\ell-1)$-set $B$, then there is also such a tight walk of length 2. 

\begin{lemma} \label{tight-link-lemma-1}
    Let $G$ be a disperse $\ell$-graph, let $e,f,g$ be a tight walk in $G$, let $v \in e$, and let $B \subseteq g$ with $|B| = \ell-1$. Then there is a tight walk $f',g'$ in $G$ such that $v \in f'$ and $B \subseteq g'$.
\end{lemma}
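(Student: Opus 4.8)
The plan is to reduce a length-3 tight walk $e,f,g$ (with $v \in e$ and $B \subseteq g$, $|B|=\ell-1$) to a length-2 tight walk $f',g'$ with $v \in f'$, $B \subseteq g'$. The natural first move is to pass to the link $L(v)$, which is a disperse $(\ell-1)$-graph by Lemma~\ref{lem:links are disperse}, and to argue inductively on the uniformity $\ell$. If $v \in f$, then $f \setminus \{v\}$ is an edge of $L(v)$ meeting $g$-related structure, and I would try to find a single edge $g'$ of $G$ with $v \in g'$ and $B \subseteq g'$ directly, or else use Lemma~\ref{lem:2-edge tight walk} to connect the relevant $(\ell-2)$-sets. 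So the interesting case is $v \notin f$, and (after possibly using that $|e \cap f| \ge \ell-1$ forces $v$ into $f$ unless $e \cap f$ misses $v$) we may assume $v \notin f$ and $v \notin g$.

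Having set that up, the key step is to understand $|e \cap f|$ and $|f \cap g|$ and to use dispersion on the $(\ell+1)$-sets $e \cup f$ and $f \cup g$. Since $|e \cap f| \ge \ell - 1$ and $v \in e \setminus f$, the $(\ell+1)$-set $e \cup f$ has at most one non-edge, so plenty of $\ell$-sets through $v$ inside $e \cup f$ are edges; in particular there is an edge $f'$ with $v \in f'$ and $|f' \cap f| \ge \ell - 1$ (the edge $f'$ lives in $e \cup f$, contains $v$, and contains $\ell-1$ vertices of $f$). This $f'$ is the candidate first edge of the length-2 walk; note $|f' \cap f| \ge \ell-1$, and $|f \cap g| \ge \ell-1$, so $f',f,g$ is itself a tight walk — but that's length 3 again, just with $v \in f'$. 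The point is now that $f', f, g$ all essentially live near a common $(\ell-1)$- or $\ell$-set, so I can hope to "shortcut" $f \to g$ into a single edge $g' \supseteq B$ that also shares $\ell-1$ vertices with $f'$.

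Concretely, I would look at the $(\ell+1)$-set $W = f \cup g$: it has $\le 1$ non-edge, $B \subseteq g \subseteq W$, and $f' \cap W \supseteq f' \cap f$ has size $\ge \ell-1$. Let $a$ be the unique vertex of $f \setminus g$ and $b$ the unique vertex of $g \setminus f$ (assuming $|f \cap g| = \ell-1$; the case $f = g$ is trivial and $|f\cap g| = \ell$ means $f=g$ for $\ell$-sets). Then $B$ is either $g \setminus \{b\} = f \cap g$, or $B = g \setminus \{w\}$ for some $w \in f \cap g$. In the first sub-case $B \subseteq f' $ would already follow if $b' \notin f'$... more carefully: since $f' \cap f$ has $\ell-1$ vertices of $f$, and I want an edge $g'$ with $B \subseteq g'$ and $|g' \cap f'|\ge \ell-1$. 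I claim $g'$ can be chosen inside $f' \cup g$ or inside $f \cup g$: using dispersion on the relevant $(\ell+1)$-set, at most one $\ell$-subset fails to be an edge, so I can dodge it while keeping $B$ in and keeping $\ell-1$ common vertices with $f'$. This is essentially a counting/pigeonhole argument on which single $\ell$-set is allowed to be a non-edge.

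The main obstacle I anticipate is the bookkeeping when several of the relevant $(\ell+1)$-sets ($e\cup f$, $f\cup g$, and the newly formed ones) overlap heavily, so that the "at most one non-edge" budgets interact — in the worst case the unique non-edge of $f \cup g$ might be exactly the $\ell$-set I wanted to use as $g'$, and simultaneously the non-edge of $e \cup f$ might block my choice of $f'$. Handling this requires showing the two bad $\ell$-sets cannot both obstruct at once, which should follow because they sit in different $(\ell+1)$-sets unless those $(\ell+1)$-sets coincide, and in the coincident case $e \cup f = f \cup g$ forces $e,g$ to overlap a lot, giving a direct length-$\le 2$ walk. A secondary subtlety is the edge cases $|e \cap f| = \ell$ or $|f \cap g| = \ell$ (i.e.\ repeated edges in the walk), which shorten the walk immediately and should be dispatched first. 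I'd also keep in mind that Lemma~\ref{lem:2-edge tight walk} already gives connectivity of $(\ell-2)$-sets in links, which may be the cleanest tool to finish rather than tracking $\ell$-sets in $G$ by hand — i.e.\ translate the goal into: $B \setminus \{x\}$ and some $(\ell-2)$-subset of $f'$ are connected in $L(v)$ for a suitable $x$, then lift back to $G$.
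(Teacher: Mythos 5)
Your setup matches the paper's: dispose of the degenerate cases ($v \in f$, $v \in g$, $B \subseteq f$, $|e \cap g| \ge \ell-1$), normalize to $e = \{v, w_1, \dots, w_{\ell-1}\}$, $f = \{w_1, \dots, w_{\ell}\}$, $g = \{w_2, \dots, w_{\ell+1}\}$, and use dispersion on the $(\ell+1)$-sets $e \cup f$ and $f \cup g$ to locate a candidate $f'$ through $v$ and a candidate $g'$ through $B$. Up to that point you are reproducing the paper's Claims \ref{claim-1-generalized-1} and \ref{claim-3-generalized-1}: if $\{v, w_2, \dots, w_{\ell}\}$ is an edge you can take $g' = g$, and if $\{w_1\} \cup B$ is an edge you can take $f' = \{v,w_1\} \cup (f \cap B)$ and $g' = \{w_1\} \cup B$; so in the bad case both of these $\ell$-sets are non-edges and all other $\ell$-subsets of $e \cup f$ and of $f \cup g$ are edges.

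The gap is in how you propose to finish. You assert that the two obstructing non-edges ``cannot both obstruct at once'' because they sit in different $(\ell+1)$-sets unless those coincide. This is backwards: precisely because $\{v,w_2,\dots,w_{\ell}\}$ lies in $e \cup f$ and $\{w_1\} \cup B$ lies in $f \cup g$, each can be the unique non-edge of its own $(\ell+1)$-set, and the dispersion conditions on $e \cup f$ and $f \cup g$ alone yield no contradiction whatsoever. (The coincidence case $e \cup f = f \cup g$ cannot occur, since it would force $v = w_{\ell+1} \in g$.) The paper's actual contradiction requires two further negative facts --- no edge of the form $\{v\} \cup A$ with $A \in \binom{g}{\ell-1}$ exists (else $(\{v\}\cup A,\, g)$ is the desired walk), and no edge $\{v,w_1\} \cup A$ with $A \subsetneq B$, $|A|=\ell-2$, $A \ne f \cap B$ exists (from dispersion on $\{v,w_1\}\cup B$, which by then carries at most one edge) --- and then a \emph{third}, carefully chosen $(\ell+1)$-set $X = \{v,w_1\} \cup (g \setminus \{x\})$ with $x \in f \cap B$, on which one exhibits exactly two edges and two non-edges. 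That final counting step is the heart of the proof and is entirely absent from your outline; without it (or an equivalent), the ``both routes blocked'' configuration is simply not refuted. A secondary issue: your fallback of translating to $L(v)$ via Lemma \ref{lem:2-edge tight walk} does not apply, since that lemma requires $v \in f$, which fails in the non-degenerate case; and connectivity of $(\ell-2)$-sets in $L(v)$ by walks of unbounded length would not produce the required two-edge walk in $G$ in any event.
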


\begin{proof}
    We first consider some easy degenerate cases. 
    If $v \in f$ or $v \in g$ then we can take $(f',g') = (f,g)$ or $(f',g') = (g,g)$, respectively. 
    If $B \subseteq f$ then take $(f',g') = (e,f)$, and if 
    $|e \cap g| \geq \ell-1$ then take $(f',g') = (e,g)$. Assuming that none of the above holds, without loss of generality we may write $e = \{v, w_1, \dots, w_{\ell - 1}\}, f = \{w_1, \dots, w_{\ell}\}$ and $g = \{w_2, \dots, w_{\ell + 1}\}$ with the vertices $v,w_1,\dots,w_{\ell+1}$ being distinct, and $B = \{w_2, \dots, w_{\ell + 1}\} \backslash \{w_i\}$ for some $i \neq \ell + 1$. Note that $|f \cap B| = \ell-2$.
    
    Suppose, for the sake of contradiction, that there is no tight walk $f',g'$ as in the lemma. 
    We proceed via a series of claims which will eventually give a contradiction.

    \begin{claim}\label{claim-1-generalized-1}
        For all $A \subseteq f \backslash \{w_1\}$ with $|A| = \ell - 2$, it holds that $\{v, w_1\} \cup A \in E(G)$.
    \end{claim}

    \begin{proof}
        Consider the $(\ell+1)$-set $X = \{v, w_1, ..., w_{\ell}\}$. As $e,f$ are both edges of $G$ contained in $X$, and as $G$ is disperse, $G$ misses at most one edge on $X$. Note that $h := \{v,w_2,\dots,w_{\ell}\}$ is not an edge of $G$, as otherwise $f' = h$ and $g' = g$ satisfy $v \in f'$, $|f' \cap g'| = |\{w_2, \dots, w_{\ell}\}| = \ell - 1$ and $B \subseteq g'$. So, $G$ contains all edges on $X$ besides $h$; in particular, $G$ contains $\{v, w_1\} \cup A$ for all $A \subseteq f \backslash \{w_1\}$ of size $\ell - 2$.
    \end{proof}

    \begin{claim}\label{claim-2-generalized-1}
        For all $A \subseteq g$ with $|A| = \ell - 1$, it holds that $\{v\} \cup A \not \in E(G)$.
    \end{claim}

    \begin{proof}
        Otherwise we can take $f' = \{v\} \cup A$ and $g' = g$,  which satisfy $v \in f'$, $|f' \cap g'| = |A| = \ell - 1$ and $B \subseteq g'$. 
    \end{proof}

    \begin{claim}\label{claim-3-generalized-1}
        For all $A \subseteq g$ with $|A| = \ell - 1$, it holds that $\{w_1\} \cup A \in E(G)$ if and only if $A \neq B$.
    \end{claim}

    \begin{proof}
        Let $X = \{w_1, ..., w_{\ell + 1}\}$. Note that $f, g$ are edges of $G$ contained in $X$, and thus, $G$ misses at most one edge on $X$. Moreover, if $\{w_1\} \cup B \in E(G)$, then letting $f' = \{v, w_1\} \cup (f \cap B)$ (which is an edge by Claim \ref{claim-1-generalized-1}) and $g' = \{w_1\} \cup B$ gives the desired result. Thus, $\{w_1\} \cup B \notin E(G)$, implying that $G$ must contain all other edges on $X$ -- in particular, all edges of the form $\{w_1\} \cup A$ with $A \subseteq g$, $|A| = \ell - 1$ and $A \neq B$.  
    \end{proof}

    \begin{claim}\label{claim-4-generalized-1}
        For all $A \subseteq B$ with $|A| = \ell - 2$ and $A \neq f \cap B$, it holds that $\{v, w_1\} \cup A \not \in E(G)$.
    \end{claim}

    \begin{proof}
        Let $X = \{v, w_1\} \cup B$, so $|X| = \ell+1$. 
        By Claim \ref{claim-2-generalized-1} we have 
        $\{v\} \cup B \not \in E(G)$, and by Claim
        \ref{claim-3-generalized-1} we have $\{w_1\} \cup B \notin E(G)$. 
        Hence, as $G$ is disperse, it can contain at most one edge on $X$, which must be the edge 
        $\{v, w_1\} \cup (f \cap B)$, as this is an edge by Claim \ref{claim-1-generalized-1}. So, $G$ cannot contain any edge of the form $\{v, w_1\} \cup A$ for $A \subseteq B$ with $|A| = \ell - 2$ and $A \neq f \cap B$. 
    \end{proof}


    Now, we use the above claims to derive a contradiction and hence prove Lemma \ref{tight-link-lemma-1}.
    Fix an arbitrary $x \in f \cap B$ (this is possible because $|f \cap B| = \ell-2 \geq 1$). 
    Set
    $C = g \backslash \{x\}$, and note that $|C| = \ell-1$; $C \neq B$ and hence $|C \cap B| = \ell-2$; $w_{\ell+1} \in C$ (as $w_{\ell+1} \notin f$ and so $x \neq w_{\ell+1}$);
    and $C \cap B \neq f \cap B$ (as $x \in f \cap B$ while $x \notin C$). 
    Consider the set $X = \{v, w_1\} \cup C$, so $|X| = \ell+1$. We will show that there exist two edges and two non-edges on $X$. Indeed, by Claim \ref{claim-1-generalized-1}, 
    $\{v, w_1\} \cup (C \backslash \{w_{\ell + 1}\}) \in E(G)$, and by Claim \ref{claim-3-generalized-1}, $\{w_1\} \cup C \in E(G)$. On the other hand, by Claim \ref{claim-2-generalized-1}, $\{v\} \cup C \not \in E(G)$, and by Claim \ref{claim-4-generalized-1}, $\{v, w_1\} \cup (C \cap B) \not \in E(G)$. This contradicts the assumption that $G$ is disperse, as desired.
\end{proof}

\noindent
Lemma \ref{tight-link-lemma-1} easily implies the following.

\begin{lemma}\label{corollary-walks}
    Let $G$ be a disperse $\ell$-graph, let $e_1, \dots, e_m$ be a tight walk in $G$, and let $v \in e_1$. 
    Then for all $1 \leq i \leq m - 1$, there exists a tight walk $f, g$ such that $v \in f$ and $e_i \cap e_{i + 1} \subseteq g$.
\end{lemma}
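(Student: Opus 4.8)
The plan is to prove Lemma~\ref{corollary-walks} by induction on $i$, using Lemma~\ref{tight-link-lemma-1} as the engine that, at each stage, collapses a length-$3$ tight walk starting at $v$ into a length-$2$ one. The base case $i=1$ is immediate: $e_1,e_2$ is a tight walk with $v\in e_1$ and $e_1\cap e_2\subseteq e_2$, so $(f,g)=(e_1,e_2)$ works. For the inductive step with $i\ge 2$, the induction hypothesis applied to the index $i-1$ gives a tight walk $f_0,g_0$ with $v\in f_0$ and $e_{i-1}\cap e_i\subseteq g_0$. Since $e_{i-1}\cap e_i$ has size at least $\ell-1$ and lies in both $g_0$ and $e_i$, we get $|g_0\cap e_i|\ge \ell-1$, so $f_0,g_0,e_i$ is a tight walk of length $3$ with $v\in f_0$. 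Assuming $|e_i\cap e_{i+1}|=\ell-1$, the set $B:=e_i\cap e_{i+1}$ is an $(\ell-1)$-subset of $e_i$, and Lemma~\ref{tight-link-lemma-1} applied to $f_0,g_0,e_i$ with the vertex $v$ and the target set $B$ yields a tight walk $f,g$ with $v\in f$ and $e_i\cap e_{i+1}=B\subseteq g$, completing the step.

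The one case needing extra care --- and which I expect to be the only real obstacle --- is the degenerate one where $|e_i\cap e_{i+1}|=\ell$, i.e.\ $e_i=e_{i+1}$. Here the conclusion asks for a tight walk $f,g$ with $v\in f$ and $e_i\subseteq g$; since $g$ is an edge, $g=e_i$, so we must produce an edge $f$ with $v\in f$ and $|f\cap e_i|\ge\ell-1$. If $v\in e_i$ take $f=e_i$ (this also covers $i=1$, since there $v\in e_1=e_i$); and with $f_0,g_0$ as above (so now $i\ge 2$), if $v\in g_0$ take $f=g_0$, and if $|f_0\cap e_i|\ge\ell-1$ take $f=f_0$. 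In the remaining situation $f_0,g_0,e_i$ is still a length-$3$ tight walk through $v$, and I would apply Lemma~\ref{tight-link-lemma-1} to it with an \emph{arbitrary} $(\ell-1)$-subset $B$ of $e_i$ as target. The point is that the proof of Lemma~\ref{tight-link-lemma-1} establishes more than its statement: for such a walk, one of the ``degenerate cases'' treated there must hold, i.e.\ $v\in g_0$, or $v\in e_i$, or $B\subseteq g_0$, or $|f_0\cap e_i|\ge\ell-1$. The first, second and fourth already finish the argument, while $B\subseteq g_0$ forces $g_0\cap e_i=B$ (otherwise $g_0=e_i$ and $|f_0\cap e_i|=|f_0\cap g_0|\ge\ell-1$, handled). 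Since $g_0\cap e_i$ is a single set whereas $e_i$ has $\ell\ge 3$ distinct $(\ell-1)$-subsets, the alternative $B\subseteq g_0$ cannot hold for every choice of $B$, so one of the other options applies and we are done. Note that this is the only place where the hypothesis $\ell\ge 3$ is used.

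In short, the content of the lemma is the clean inductive step, where Lemma~\ref{tight-link-lemma-1} shortens the three-edge walk $f_0,g_0,e_i$ directly to the target $e_i\cap e_{i+1}$; the repeated-edge case $e_i=e_{i+1}$ is the only technical wrinkle, and it is the step I would be most careful to get right, since there one has to extract the extra structural information from the proof of Lemma~\ref{tight-link-lemma-1} and combine it with the counting argument over the $(\ell-1)$-subsets of $e_i$.
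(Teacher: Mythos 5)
Your base case and your main inductive step are exactly the paper's proof: the induction hypothesis gives a tight walk $f_0,g_0$ with $v\in f_0$ and $e_{i-1}\cap e_i\subseteq g_0$, so $f_0,g_0,e_i$ is a tight walk, and Lemma \ref{tight-link-lemma-1} applied with $B=e_i\cap e_{i+1}$ finishes. That part is fine.

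The problem is your treatment of the repeated-edge case $e_i=e_{i+1}$. First, your key assertion about Lemma \ref{tight-link-lemma-1} is a misreading: its proof does \emph{not} establish that one of the degenerate cases must hold. It shows that \emph{if} none of them holds \emph{and} no suitable walk $f',g'$ exists, then one reaches a contradiction; i.e., in the non-degenerate case the desired walk exists (non-constructively). Nothing makes the non-degenerate case vacuous, so you cannot conclude that $v\in g_0$, $v\in e_i$, $B\subseteq g_0$, or $|f_0\cap e_i|\ge\ell-1$. Second, and more decisively, the statement you are trying to prove in that case is false, so no patch can succeed. Take $\ell=3$ and the $3$-graph on $\{v,a,b,c,d\}$ with edges $\{v,a,b\}$, $\{v,a,c\}$, $\{v,a,d\}$, $\{a,b,c\}$, $\{a,b,d\}$, $\{a,c,d\}$, $\{b,c,d\}$. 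Every $4$-set spans $1$, $3$ or $4$ edges, so this graph is disperse. The sequence $\{v,a,b\},\{a,b,c\},\{b,c,d\},\{b,c,d\}$ is a tight walk whose first edge contains $v$, but for $i=3$ the conclusion would require an edge $g\supseteq e_3\cap e_4=\{b,c,d\}$, hence $g=\{b,c,d\}$, together with an edge $f\ni v$ with $|f\cap\{b,c,d\}|\ge 2$; the only edges through $v$ are $\{v,a,b\},\{v,a,c\},\{v,a,d\}$, each meeting $\{b,c,d\}$ in a single vertex. So the lemma genuinely fails when $e_i=e_{i+1}$. The paper's own write-up silently assumes $|e_{i-1}\cap e_i|=|e_i\cap e_{i+1}|=\ell-1$, i.e., that consecutive edges of the walk are distinct; the correct move is not to prove the repeated-edge case but to exclude it, for instance by deleting consecutive duplicate edges from the walk, which costs nothing in the subsequent application (Lemma \ref{main-lemma-1}).
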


\begin{proof}
    We prove this by induction on $i$. For $i = 1$ the claim is trivial by letting $f = g = e_1$. For the inductive step, let $f, g$ be a tight walk with $v \in f$ and $e_{i - 1} \cap e_i \subseteq g$. Note that $f, g, e_i$ is a tight walk as $f,g$ is a tight walk and $|g \cap e_i| \geq |e_{i - 1} \cap e_i| = \ell - 1$. Moreover, $v \in f$ and $B := e_i \cap e_{i + 1} \subseteq e_i$, so we may apply Lemma \ref{tight-link-lemma-1} to obtain a tight walk $f',g'$ with $v \in f'$ and $e_i \cap e_{i + 1} \subseteq g'$, as desired. 
\end{proof}

\noindent
We now use this result to prove the following key lemma.

\begin{lemma}\label{main-lemma-1}
    Let $G$ be a disperse $\ell$-graph, let $v \in V(G)$ and let $A,B \subseteq V(G) \setminus \{v\}$ with $|A| = |B| = \ell-2$. 
    If $A \cup \{v\}$ and $B \cup \{v\}$ are connected in $G$, then $A$ and $B$ are connected in $L(v)$.
\end{lemma}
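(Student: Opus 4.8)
The plan is to transport the given $G$-tight walk from $A\cup\{v\}$ to $B\cup\{v\}$ into $L(v)$ one step at a time, using Lemma \ref{corollary-walks} to drag the vertex $v$ along, and Lemma \ref{lem:2-edge tight walk} to collapse each intermediate piece into a single tight component of $L(v)$. Fix a tight walk $e_1,\dots,e_m$ in $G$ with $A\cup\{v\}\subseteq e_1$ and $B\cup\{v\}\subseteq e_m$; since $v\notin A\cup B$ this forces $v\in e_1\cap e_m$, $A\subseteq e_1\setminus\{v\}$, $B\subseteq e_m\setminus\{v\}$, and makes $e_1\setminus\{v\}$ and $e_m\setminus\{v\}$ edges of $L(v)$. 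If $m=1$ then $A$ and $B$ both lie in the $L(v)$-edge $e_1\setminus\{v\}$ and we are done, so assume $m\geq 2$. Write $S_i:=e_i\cap e_{i+1}$ for $1\leq i\leq m-1$, so $|S_i|\geq\ell-1$. Since $v\in e_1$, Lemma \ref{corollary-walks} gives, for each such $i$, a tight walk $f^{(i)},g^{(i)}$ in $G$ with $v\in f^{(i)}$ and $S_i\subseteq g^{(i)}$; applying Lemma \ref{lem:2-edge tight walk} to $f^{(i)},g^{(i)}$ shows that all $(\ell-2)$-subsets of $g^{(i)}\setminus\{v\}$ lie in one common tight component $\mathcal{C}_i$ of $L(v)$.

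First I would attach $A$ to $\mathcal{C}_1$: choose an $(\ell-2)$-set $T_1\subseteq S_1$ with $v\notin T_1$ (possible since $|S_1\setminus\{v\}|\geq\ell-2$). Then $T_1\subseteq e_1\setminus\{v\}$, so $A$ and $T_1$, being $(\ell-2)$-subsets of the $L(v)$-edge $e_1\setminus\{v\}$, are connected in $L(v)$; and $T_1\subseteq g^{(1)}\setminus\{v\}$ gives $T_1\in\mathcal{C}_1$. Symmetrically a set $T_{m-1}\subseteq S_{m-1}$ with $v\notin T_{m-1}$ attaches $B$ to $\mathcal{C}_{m-1}$ via the $L(v)$-edge $e_m\setminus\{v\}$. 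It remains to show $\mathcal{C}_i=\mathcal{C}_{i+1}$ for $1\leq i\leq m-2$. Since $S_i,S_{i+1}\subseteq e_{i+1}$ and $|e_{i+1}|=\ell$, we get $|S_i\cap S_{i+1}|\geq|S_i|+|S_{i+1}|-\ell\geq\ell-2$. If $v\notin e_{i+1}$, then $v\notin S_i\cap S_{i+1}$, so picking an $(\ell-2)$-set $T\subseteq S_i\cap S_{i+1}$ we have $T\subseteq g^{(i)}\setminus\{v\}$ and $T\subseteq g^{(i+1)}\setminus\{v\}$, forcing $\mathcal{C}_i=\mathcal{C}_{i+1}$. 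If $v\in e_{i+1}$, then $e_{i+1}\setminus\{v\}$ is an $L(v)$-edge; choosing $(\ell-2)$-sets $P\subseteq S_i\setminus\{v\}$ and $Q\subseteq S_{i+1}\setminus\{v\}$, we have $P,Q\subseteq e_{i+1}\setminus\{v\}$, so $P$ and $Q$ are connected in $L(v)$ while $P\in\mathcal{C}_i$ and $Q\in\mathcal{C}_{i+1}$; again $\mathcal{C}_i=\mathcal{C}_{i+1}$. Chaining everything, $A$ is connected in $L(v)$ to $T_1\in\mathcal{C}_1=\dots=\mathcal{C}_{m-1}\ni T_{m-1}$, which is connected to $B$, and transitivity of connectivity in $L(v)$ finishes the proof.

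The real content sits in the two lemmas already proved: Lemma \ref{corollary-walks} lets us keep a representative of $v$ within reach of each edge $g^{(i)}$, and Lemma \ref{lem:2-edge tight walk} compresses each $g^{(i)}\setminus\{v\}$ to a single component of $L(v)$, after which the argument is pure bookkeeping with $(\ell-2)$-sets and one application of inclusion--exclusion inside $e_{i+1}$. The one point that needs care -- and the reason the step $\mathcal{C}_i=\mathcal{C}_{i+1}$ is not a one-liner -- is that $v$ may belong to an intermediate edge $e_{i+1}$, in which case a common $(\ell-2)$-subset of $S_i\cap S_{i+1}$ could be forced to contain $v$ and hence fail to be a vertex set of $L(v)$ at all; the small case split above handles exactly this possibility.
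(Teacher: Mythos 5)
Your proof is correct and follows essentially the same route as the paper's: both chain together the triple intersections $e_i\cap e_{i+1}\cap e_{i+2}$, using Lemma \ref{corollary-walks} to bring an edge containing $v$ next to each $e_i\cap e_{i+1}$ and Lemma \ref{lem:2-edge tight walk} to merge consecutive pieces inside $L(v)$. The only difference is organizational: the paper first disposes of intermediate edges containing $v$ via an outer induction on $m$, whereas you handle that possibility locally in the step $\mathcal{C}_i=\mathcal{C}_{i+1}$, which is arguably slightly cleaner.
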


\begin{proof}
    
    Let $e_1,\dots,e_m$ be a tight walk in $G$ between $A \cup \{v\}$ and $B \cup \{v\}$. 
    We will prove the lemma by induction on $m$. 
    For $m \leq 2$ the lemma is trivial, by simply observing that $e_1 \backslash \{v\}, e_m \backslash \{v\}$ is a tight walk between $A$ and $B$ in $L(v)$. So, suppose $m \geq 3$. First, consider the case where $v \in e_i$ for some $1 < i < m$. 
    Fix any $C \subseteq e_i \setminus \{v\}$ of size $\ell - 2$.
    By the inductive hypothesis, $A$ and $C$ are connected in $L(v)$, because $e_1,\dots,e_i$ is a tight walk between $A \cup \{v\}$ and $C \cup \{v\}$ in $G$. Similarly, $C$ and $B$ are connected in $L(v)$, 
    by applying the inductive hypothesis to $e_i,\dots,e_m$.
    By transitivity, $A$ and $B$ are connected in $L(v)$. 
    
    Now, suppose that $v \not \in e_i$ for every $1 < i < m$. 
    For $1 \leq i \leq m-2$, let $C_i := e_i \cap e_{i+1} \cap e_{i+2}$. Then $|C_i| \geq \ell-2$ and we may assume, by passing to a subset if necessary, that $|C_i| = \ell-2$. Note that $v \notin C_i$ for every $1 \leq i \leq m-2$. 
    Set also $C_0 = A$ and $C_{m-1} = B$. 
    It suffices to show that for every $1 \leq i \leq m-1$, $C_{i-1},C_{i}$ are connected in $L(v)$. Indeed, this would imply, by transitivity, that $A = C_0$ and $B = C_{m-1}$ are connected in $L(v)$.
    Observe that $A$ and $C_1$ are connected in $L(v)$ because $e_1 \setminus \{v\}$ is an edge of $L(v)$ containing $A, C_1$.
    Similarly, $B$ and $C_{m-2}$ are connected in $L(v)$ because  $v \in e_m$ and $B,C_{m-2} \subseteq e_m \setminus \{v\}$. 
    Now let $2 \leq i \leq m-2$. 
    By Lemma \ref{corollary-walks}, there is a tight walk $f, g$ with $v \in f$ and $e_i \cap e_{i + 1} \subseteq g$. 
    By definition, $C_i,C_{i-1} \subseteq e_i \cap e_{i+1}$, and hence $C_i,C_{i-1} \subseteq g$. Now, by Lemma \ref{lem:2-edge tight walk}, $C_i,C_{i-1}$ are connected in $L(v)$, as required.
\end{proof}

\noindent
Lemma \ref{main-lemma-1} immediately implies the following.

\begin{theorem}\label{links-are-tightly-connected-1}
    Let $G$ be a tightly connected disperse $\ell$-graph. Then $L(v)$ is tightly connected for all $v \in V(G)$. 
\end{theorem}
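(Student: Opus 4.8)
The plan is to derive Theorem~\ref{links-are-tightly-connected-1} as a direct consequence of Lemma~\ref{main-lemma-1}. The statement to prove is that if $G$ is a tightly connected disperse $\ell$-graph, then $L(v)$ is tightly connected for every $v \in V(G)$; that is, every pair $A, B \in \binom{V(G) \setminus \{v\}}{\ell-2}$ is connected by a tight walk in the $(\ell-1)$-graph $L(v)$. So first I would fix an arbitrary vertex $v \in V(G)$ and an arbitrary pair of $(\ell-2)$-sets $A, B \subseteq V(G) \setminus \{v\}$; the goal is to show $A$ and $B$ lie in the same tight component of $L(v)$.

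The key observation is that $A \cup \{v\}$ and $B \cup \{v\}$ are both $(\ell-1)$-subsets of $V(G)$, so since $G$ is tightly connected, they are connected by a tight walk in $G$. Now I would invoke Lemma~\ref{main-lemma-1} verbatim: it says precisely that whenever $A \cup \{v\}$ and $B \cup \{v\}$ are connected in $G$ (and $|A| = |B| = \ell - 2$), then $A$ and $B$ are connected in $L(v)$. This immediately gives what we want. Since $A, B$ were arbitrary, every pair of $(\ell-2)$-sets of $V(L(v))$ is connected in $L(v)$, so $L(v)$ is tightly connected.

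There is essentially no obstacle here — all the real work has been front-loaded into Lemma~\ref{main-lemma-1} (and before that, into the main technical Lemma~\ref{tight-link-lemma-1} and its corollary). The only minor point to keep an eye on is the degenerate regime: if $|V(G)| \le \ell$ then $\binom{V(G) \setminus \{v\}}{\ell-2}$ could be small or the statement could be vacuous, but in those cases tight connectivity of $L(v)$ holds trivially since there is at most one $(\ell-2)$-set, or one should note the base case $\ell = 3$ where $L(v)$ is an ordinary graph and the claim reduces to connectivity. The proof is therefore a one-line deduction: apply Lemma~\ref{main-lemma-1} to each pair $A, B$, using tight connectivity of $G$ to supply the hypothesis.
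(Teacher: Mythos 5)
Your proposal is correct and is essentially identical to the paper's proof: fix $v$ and arbitrary $(\ell-2)$-sets $A,B$, use tight connectivity of $G$ to connect $A \cup \{v\}$ and $B \cup \{v\}$, and apply Lemma~\ref{main-lemma-1}. No issues.
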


\begin{proof}
    Let $v \in V(G)$. Fix any $A,B \subseteq V(G) \setminus \{v\}$ of size $\ell-2$. As $G$ is tightly connected, $A \cup \{v\}$ and $B \cup \{v\}$ are connected in $G$. Hence, by Lemma \ref{main-lemma-1}, $A$ and $B$ are connected in $L(v)$. This shows that $L(v)$ is tightly connected.
\end{proof}

Next, we need the following simple lemma proved by the first author and Tomon \cite{GT}. For completeness, we include the proof. 

\begin{lemma}\label{gishboliner-and-tomon-result}
    Let $G$ be a disperse $3$-graph. Then $L(v)$ is a cograph for every $v \in V(G)$.
\end{lemma}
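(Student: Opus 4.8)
The statement to prove is Lemma~\ref{gishboliner-and-tomon-result}: if $G$ is a disperse $3$-graph, then $L(v)$ is a cograph for every $v\in V(G)$. Recall that a cograph is exactly a graph with no induced $P_4$ (path on four vertices), so the plan is to show $L(v)$ is induced-$P_4$-free. By Lemma~\ref{lem:links are disperse}, $L(v)$ is a disperse $2$-graph; but here ``disperse'' for a $2$-graph means every $3$-set of vertices spans $0,1,2$ or $3$ edges --- which is vacuous. So dispersion of the link alone is not enough; I will instead use dispersion of $G$ itself applied to $5$-sets formed by $v$ together with a putative $P_4$ in $L(v)$.

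The plan is as follows. Suppose for contradiction that $L(v)$ contains an induced $P_4$ on vertices $a,b,c,d$ with edges $ab,bc,cd$ and non-edges $ac,bd,ad$. Translating back to $G$: the triples $\{v,a,b\},\{v,b,c\},\{v,c,d\}$ are edges of $G$, while $\{v,a,c\},\{v,b,d\},\{v,a,d\}$ are non-edges of $G$. Now consider the $5$-set $X=\{v,a,b,c,d\}$, which for a $3$-graph has $\binom{5}{3}=10$ triples. Among the four triples through $v$ listed above (note $\{v,a,d\}$ and the three others --- actually all $\binom{4}{2}=6$ pairs from $\{a,b,c,d\}$ give $6$ triples through $v$), we already know $3$ are edges and $3$ are non-edges of $G$. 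That is more than enough structure: a disperse $3$-graph has $e_G(X)\in\{0,1,3,4\}$ for every $5$-set $X$, so in particular $X$ cannot contain $3$ edges and $3$ non-edges simultaneously. More carefully, since $e_G(X)\ge 3$ (we have three edges through $v$) and $e_G(X)\le 10-3=7$, dispersion forces $e_G(X)\in\{3,4\}$. I will then examine the remaining triples --- the $\binom{4}{3}=4$ triples contained in $\{a,b,c,d\}$ --- and show, using dispersion of $G$ on other $5$-subsets or on $X$ itself, that too many edges are forced, pushing $e_G(X)$ above $4$, a contradiction.

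Concretely, I expect the clean finish to go through a single $5$-set: on $X=\{v,a,b,c,d\}$ we have $3$ edges ($\{v,a,b\},\{v,b,c\},\{v,c,d\}$) and $3$ non-edges ($\{v,a,c\},\{v,a,d\},\{v,b,d\}$) just among the triples through $v$, so whatever the other four triples do, $e_G(X)\ge 3$ and $X$ spans at least $3$ non-edges, i.e. $e_G(X)\le 7$; combined with dispersion $e_G(X)\in\{3,4\}$, at most one of the four triples in $\binom{\{a,b,c,d\}}{3}$ is an edge. I then pass to a smaller configuration: e.g. look at the pattern on $4$-subsets or re-examine which triples through $v$ must be present, and derive that in fact strictly more than one of those four triples must be an edge. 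The main obstacle --- and the part needing a little care --- is pinning down exactly which auxiliary $5$-sets or the internal structure of $\{a,b,c,d\}$ to invoke so that the count is forced past $4$; the robust way is to split into the small number of cases for how many of the four ``interior'' triples are edges ($0$ or $1$, by the above) and in each case build a fresh $(\ell+1)$-set witnessing a forbidden edge count $2$ or $5$ for $G$, contradicting dispersion. Everything else is a finite check.
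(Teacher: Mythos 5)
There is a genuine gap, and it stems from a misreading of the definition of \emph{disperse}. For a $3$-graph, dispersion constrains the edge count of every \emph{$4$-set} (i.e.\ every $(\ell+1)$-set with $\ell=3$) to lie in $\{0,1,3,4\}$; it says nothing directly about $5$-sets. Your central step --- ``a disperse $3$-graph has $e_G(X)\in\{0,1,3,4\}$ for every $5$-set $X$, so \dots\ dispersion forces $e_G(X)\in\{3,4\}$'' for $X=\{v,a,b,c,d\}$ --- is therefore unjustified, and is in fact false as a general statement: the $3$-graph on $\{1,\dots,5\}$ whose only edges are $\{1,2,3\}$ and $\{3,4,5\}$ is disperse (no $4$-set contains both edges, so every $4$-set spans $0$ or $1$ edges), yet this $5$-set spans exactly $2$ edges. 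Everything downstream of that claim (``at most one of the four triples inside $\{a,b,c,d\}$ is an edge,'' and the plan to push $e_G(X)$ past $4$) collapses, and the remainder of the proposal is an admitted sketch (``the part needing a little care is pinning down exactly which auxiliary $5$-sets \dots to invoke'') rather than a completed argument.

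The fix is short and stays entirely at the level of $4$-sets, which is what the paper does. With $ab,bc,cd$ edges and $ac,bd,ad$ non-edges of $L(v)$: the $4$-set $\{v,a,b,c\}$ already spans the two edges $\{v,a,b\},\{v,b,c\}$ and the non-edge $\{v,a,c\}$, so to avoid the forbidden count $2$ one must have $\{a,b,c\}\in E(G)$; symmetrically $\{b,c,d\}\in E(G)$. The $4$-set $\{v,a,b,d\}$ spans one edge and two non-edges through $v$, so to avoid count $2$ one must have $\{a,b,d\}\notin E(G)$; symmetrically $\{a,c,d\}\notin E(G)$. Now the $4$-set $\{a,b,c,d\}$ spans exactly $2$ edges, contradicting dispersion. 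Your closing remark about building fresh $(\ell+1)$-sets witnessing the forbidden count $2$ is the right instinct; the point is that for $\ell=3$ those sets have $4$ vertices, not $5$.
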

\begin{proof}
Let $v \in V(G)$, and suppose by contradiction that $a,b,c,d$ is an induced path in $L(v)$. In order for $v,a,b,c$ not to span two edges, we must have $\{a,b,c\} \in E(G)$. Similarly, $\{b,c,d\} \in E(G)$ and $\{a,b,d\},\{a,c,d\} \notin E(G)$. But now $a,b,c,d$ span two edges, a contradiction.  
\end{proof}

\noindent
We now proceed to the proof of Theorem \ref{thm:not tightly connected}.

\begin{proof}[Proof of Theorem \ref{thm:not tightly connected}]
    We use induction on $\ell$. When $\ell = 3$, the link of each vertex is a cograph by Lemma \ref{gishboliner-and-tomon-result}, and every cograph $H$ satisfies that $H$ or $\overline{H}$ is not connected. In particular, by the contrapositive of Theorem \ref{links-are-tightly-connected-1}, this implies that $G$ or $\overline{G}$ is not tightly connected. Now, suppose $\ell > 3$. 
    Fix any $v \in V(G)$. By Lemma \ref{lem:links are disperse}, $L_G(v)$ is disperse. Hence, by the induction hypothesis, $L_G(v)$ or $\overline{L_G(v)} = L_{\overline{G}}(v)$ is not tightly connected. Thus, $G$ or $\overline{G}$ is not tightly connected by the contrapositive of Theorem \ref{links-are-tightly-connected-1}, as desired.
\end{proof}

Next, we consider the structure of tight components in disperse hypergraphs, with the goal of proving Theorem \ref{thm:tight components are cliques-1}. We begin a sequence of lemmas.  



\begin{lemma}\label{length-2-lemma-1}
    Let $G$ be a disperse $\ell$-graph and let $A_1,A_2 \subseteq V(G)$ with $|A_1| = |A_2| = \ell-1$ and $|A_1 \cap A_2| = \ell-2$.
    Suppose that $G$ has a tight walk between $A_1$ and $A_2$. Then there exists such a tight walk on two or less edges.
\end{lemma}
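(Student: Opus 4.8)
The plan is to argue by induction on the length $m$ of a shortest tight walk between $A_1$ and $A_2$, aiming to show that if $m \geq 3$ we can always produce a strictly shorter walk, which contradicts minimality unless $m \leq 2$. So let $e_1, \dots, e_m$ be a shortest tight walk with $A_1 \subseteq e_1$ and $A_2 \subseteq e_m$, and suppose $m \geq 3$. The first reduction is to note that since $|A_1 \cap A_2| = \ell - 2$, there is a single vertex $v$ with $\{v\} = A_1 \setminus A_2$ and a single vertex $w$ with $\{w\} = A_2 \setminus A_1$; write $A := A_1 \cap A_2$, so $|A| = \ell - 2$, $A_1 = A \cup \{v\}$, $A_2 = A \cup \{w\}$. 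The idea is to exploit the main structural machinery already developed: apply Lemma \ref{corollary-walks} (or Lemma \ref{main-lemma-1}) with the vertex $v$ to transfer the long walk down into the link $L(v)$, where things are one uniformity lower and, crucially, where $A \subseteq A_1 \setminus \{v\}$ and $A \cup \{w\}$-type sets live. One would then want to either induct on uniformity or re-use the conclusion in $L(v)$.

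More concretely, here is the route I expect to work. Since $A_1 = A \cup \{v\}$ and $A_1 \subseteq e_1$, we have $v \in e_1$, so Lemma \ref{corollary-walks} applies to the tight walk $e_1, \dots, e_m$ and the vertex $v$: for $i = m-1$ we obtain a tight walk $f, g$ in $G$ with $v \in f$ and $e_{m-1} \cap e_m \subseteq g$. Note $|e_{m-1} \cap e_m| \geq \ell - 1$, and $A_2 \subseteq e_m$; the delicate point is that $e_{m-1}\cap e_m$ may or may not contain $A_2$. If $A_2 \subseteq g$ we are already done, since $f,g$ is a two-edge tight walk from $A_1$ (via $v \in f$, but we also need $A \subseteq f$) — so in fact I would instead apply the sharper Lemma \ref{main-lemma-1}: because $A_1 = A\cup\{v\}$ and $A_2 = A\cup\{w\}$ are connected in $G$ and agree off $v$... no, $A_2$ does not contain $v$. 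The correct move is: pass to $L(v)$. We have $A, A_2 \setminus \{v\}$... but $w \ne v$, so $A_2 \subseteq V(G)\setminus\{v\}$ only if $v \notin A_2$, which holds. So consider $A$ and $A \cup \{w\}$ — wait, $|A \cup \{w\}| = \ell - 1$, not $\ell - 2$. Instead set $B := A_2 \setminus \{x\}$ for some fixed $x \in A$; then $|B| = \ell - 2$, $v \notin B$, and $A, B \subseteq V(G) \setminus \{v\}$. By Lemma \ref{main-lemma-1}, since $A \cup \{v\} = A_1$ and $B \cup \{v\}$ are connected in $G$ (extend the walk $e_1,\dots,e_m$ by the single edge $e_m$ again if $B\cup\{v\}\subseteq e_m$, which holds as $B \subseteq A_2 \subseteq e_m$ and... we need $v \in e_m$, which need not hold).

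Given these subtleties, the cleanest plan is a direct local induction entirely inside $G$, mirroring the proof of Lemma \ref{main-lemma-1}: take a shortest walk $e_1,\dots,e_m$, $m\ge 3$, and split on whether $|e_1 \cap e_m| \geq \ell - 1$ (then $e_1, e_m$ — or a shortening — already works, handling it by the disperse condition on $e_1 \cup e_m$ exactly as in Lemma \ref{tight-link-lemma-1}'s degenerate cases), versus the generic case where one invokes Lemma \ref{corollary-walks} at index $i = m - 2$ with the vertex $v \in A_1 \subseteq e_1$ to get a two-edge walk $f, g$ with $v \in f$ and $e_{m-2}\cap e_{m-1} \subseteq g$, then appends $e_{m-1}, e_m$ and uses dispersity on the resulting small configuration to collapse length. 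I expect the main obstacle to be exactly this bookkeeping of which $(\ell-1)$-set sits inside which edge of the shortened walk — in particular ensuring the full set $A_1$ (not just $v$) lands in the first edge of the new walk, which is why one should run the argument through the link $L(v)$ and quote that two $(\ell-2)$-sets connected in $L(v)$ with overlap $\ell - 3$ also enjoy a length-$\le 2$ walk there (an inductive-on-$\ell$ version of the very statement being proved), then lift back via Lemma \ref{lem:2-edge tight walk}. The base case $\ell = 3$ reduces to the analogous fact for cographs, available from Lemma \ref{gishboliner-and-tomon-result}.
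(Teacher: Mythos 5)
The route you finally converge on in your last paragraph is indeed the paper's proof: induct on the uniformity $\ell$, pass to the link of a vertex, use Lemma \ref{main-lemma-1} to transfer connectivity into the link, apply the statement one uniformity down (with sets of size $\ell-2$ and overlap $\ell-3$), and handle $\ell=3$ via the cograph property of links (Lemma \ref{gishboliner-and-tomon-result}). However, as written the proposal has a concrete error that breaks this reduction: you fix $v$ to be the vertex of $A_1 \setminus A_2$, but for the link argument you must take $v \in A_1 \cap A_2$ (which is possible since $|A_1 \cap A_2| = \ell-2 \geq 1$). Only then are \emph{both} $A_1 \setminus \{v\}$ and $A_2 \setminus \{v\}$ sets of size $\ell-2$ living in $L(v)$, with intersection of size $\ell-3$, so that Lemma \ref{main-lemma-1} gives their connectivity in $L(v)$ and the induction hypothesis applies. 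With your choice of $v$, $A_2 \setminus \{v\} = A_2$ still has size $\ell-1$ and the whole reduction collapses — which is exactly the confusion you run into mid-proposal and never resolve.

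Two further points. First, the ``lift back via Lemma \ref{lem:2-edge tight walk}'' step is the wrong direction: that lemma pushes information from $G$ down into a link, whereas lifting a length-$\leq 2$ tight walk $e_1,e_2$ from $L(v)$ back to $G$ is immediate — $e_1 \cup \{v\}, e_2 \cup \{v\}$ is the desired walk, and it starts at $A_1$ and ends at $A_2$ precisely because $v$ belongs to both. Second, the bulk of the proposal (the induction on walk length $m$, the various applications of Lemma \ref{corollary-walks} at indices $m-1$ and $m-2$) consists of abandoned attempts; none of them is carried to completion, and the bookkeeping difficulties you identify there (getting all of $A_1$, not just one vertex, into the first edge of the shortened walk) are real obstacles to that approach. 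The proof is salvageable by simply deleting everything before the final plan and correcting the choice of $v$, but in its present form it does not constitute a proof.
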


\begin{proof}
    We prove this claim by induction on the uniformity $\ell$. First, suppose $\ell = 3$ and write $A_1 = \{v,x\}$, $A_2 = \{v,y\}$. By Lemma \ref{main-lemma-1}, $x$ and $y$ are in the same component of $L(v)$. Since $L(v)$ is a cograph by Lemma \ref{gishboliner-and-tomon-result}, $L(v)$ is induced $P_4$-free, and so, the shortest path from $x$ to $y$ in $L(v)$ has length at most 2. Thus, $G$ contains a tight walk from $\{v,x\}$ to $\{v,y\}$ of length at most 2, as required. Now, suppose $\ell \geq 4$. Let $v \in A_1 \cap A_2$ be arbitrary. By Lemma \ref{main-lemma-1}, there is a tight walk from $A_1 \setminus \{v\}$ to $A_2 \setminus \{v\}$ in $L(v)$. Hence, by the inductive hypothesis, there exists such a walk of length at most 2. Denote this walk by $e_1, e_2$ (possibly $e_1=e_2$). Then, $e_1 \cup \{v\}, e_2 \cup \{v\}$ is a tight walk of length at most 2 from $A_1$ to $A_2$ in $G$, as desired.
\end{proof}

\begin{lemma}\label{dec-27-lemma-1}
    Let $G$ be a disperse $\ell$-graph, 
    let $B \subseteq V(G)$ of size $\ell$, and let $A_1,A_2 \subseteq B$ be distinct subsets of size $\ell-1$. 
    If $A_1,A_2$ belong to the same tight component $\mC$ of $G$, then every $(\ell - 1)$-subset of $B$ also belongs to $\mC$. 
\end{lemma}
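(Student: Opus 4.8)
The statement to prove is Lemma~\ref{dec-27-lemma-1}: if $B$ has size $\ell$, and two distinct $(\ell-1)$-subsets $A_1, A_2 \subseteq B$ lie in the same tight component $\mC$, then all $\binom{B}{\ell-1}$ lie in $\mC$. The plan is to induct on $\ell$, using the link operation to reduce the uniformity, exactly as in the surrounding lemmas. The base case $\ell = 3$: here $B = \{a,b,c\}$, and the $(\ell-1)=2$-subsets of $B$ are $\{a,b\},\{a,c\},\{b,c\}$; we are given two of them, say $\{a,b\}$ and $\{a,c\}$, are tightly connected, and must show $\{b,c\}$ is too. Since $\{a,b\}$ and $\{a,c\}$ share the vertex $a$, Lemma~\ref{main-lemma-1} (with $v = a$) tells us $b$ and $c$ are connected in the link $L(a)$, which is a cograph (Lemma~\ref{gishboliner-and-tomon-result}), hence connected via a path — but I actually just need a tight walk in $G$ from $\{a,b\}$ to $\{a,c\}$, which is Lemma~\ref{length-2-lemma-1}: there is a tight walk $e_1, e_2$ of length $\le 2$. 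Unpacking what a length-$\le 2$ tight walk between two $2$-sets sharing a vertex looks like in a $3$-graph, one checks that $\{b,c\}$ is forced into the same component — the intermediate edge $e_1$ (if the walk has length $2$) or $e_1 = \{a,b,c\}$ itself (if length $1$) contains all three pairs. So in the base case one essentially reads off the conclusion from the explicit short walk.

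For the inductive step $\ell \ge 4$: I want to pick a vertex $v \in B$ and pass to the link $L(v)$, which is a disperse $(\ell-1)$-graph by Lemma~\ref{lem:links are disperse}. The subtlety is choosing $v$ so that $v$ lies in both $A_1$ and $A_2$ — this is where the hypothesis $|A_1 \cap A_2| \ge \ell - 2 \ge 2 > 0$ is used (two distinct $(\ell-1)$-subsets of an $\ell$-set intersect in exactly $\ell-2$ vertices). Fix such a $v \in A_1 \cap A_2$. Then $A_1 \setminus \{v\}$ and $A_2 \setminus \{v\}$ are distinct $(\ell-2)$-subsets of $B \setminus \{v\}$ (a set of size $\ell-1$), and by Lemma~\ref{main-lemma-1} they are connected in $L(v)$, i.e.\ lie in the same tight component of $L(v)$. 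By the inductive hypothesis applied to the disperse $(\ell-1)$-graph $L(v)$ with the $(\ell-1)$-set $B \setminus \{v\}$, \emph{every} $(\ell-2)$-subset of $B \setminus \{v\}$ lies in that same tight component of $L(v)$. Now I need to lift this back to $G$: for any $(\ell-1)$-subset $A \subseteq B$, I want to show $A \in \mC$. If $v \in A$, then $A \setminus \{v\}$ is an $(\ell-2)$-subset of $B \setminus \{v\}$, tightly connected in $L(v)$ to $A_1 \setminus \{v\}$; prepending $v$ to the edges of that walk gives a tight walk in $G$ from $A$ to $A_1$, so $A \in \mC$.

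The remaining case is an $(\ell-1)$-subset $A \subseteq B$ with $v \notin A$ — but there is only one such set, namely $A = B \setminus \{v\}$ itself, and I must show it lies in $\mC$. For this I can re-run the argument with a \emph{different} vertex $w \in B$: since $\ell \ge 4$ there is some $w \in A_1 \cap A_2$ with $w \ne v$ (as $|A_1 \cap A_2| = \ell - 2 \ge 2$), and then $B \setminus \{v\}$ contains $w$, so the previous paragraph (with $w$ in place of $v$) applies and puts $B \setminus \{v\}$ in $\mC$. The main obstacle — really the only thing requiring care — is this bookkeeping about which $(\ell-1)$-subsets of $B$ contain a given deleted vertex, and making sure that across the two choices of deleted vertex every subset of $B$ is covered; this works precisely because $|A_1 \cap A_2| = \ell - 2 \ge 2$ for $\ell \ge 4$, giving two distinct usable vertices. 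Everything else is a direct application of Lemmas~\ref{lem:links are disperse}, \ref{main-lemma-1}, and \ref{length-2-lemma-1} together with the trivial lift of a tight walk in $L(v)$ to a tight walk in $G$ by adjoining $v$ to every edge.
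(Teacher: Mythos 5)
Your overall route (induction on $\ell$ via links, using Lemma \ref{main-lemma-1} to descend and the trivial lift of a walk in $L(v)$ to ascend) is genuinely different from the paper's, which handles all $\ell$ at once with no induction. The inductive step you describe is sound: the choice of two distinct vertices $v,w \in A_1 \cap A_2$ (possible since $|A_1\cap A_2|=\ell-2\ge 2$ for $\ell\ge 4$) does cover every $(\ell-1)$-subset of $B$. But your base case $\ell=3$ has a genuine gap, and it is exactly where the real content of the lemma sits. You claim that if the tight walk $e_1,e_2$ from $\{a,b\}$ to $\{a,c\}$ has length $2$, then ``the intermediate edge $e_1$ contains all three pairs.'' This is false in the case $e_1=\{a,b,z\}$, $e_2=\{a,z,c\}$ with $z\notin\{b,c\}$ --- a configuration fully consistent with the output of Lemma \ref{length-2-lemma-1} (its $\ell=3$ proof produces precisely a path $b,z,c$ in the cograph $L(a)$). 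In that case neither $e_1$ nor $e_2$ contains the pair $\{b,c\}$, so nothing you have said puts $\{b,c\}$ in $\mC$.

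The missing idea is the one the paper uses: since $e_1\ne e_2$ and $|e_1\cap e_2|=\ell-1$, the $(\ell+1)$-set $C=e_1\cup e_2$ already spans two edges, so by disperseness it misses at most one $\ell$-subset; hence every $(\ell-1)$-subset $A$ of $C$ (in the $\ell=3$ case, the pair $\{b,c\}\subseteq\{a,b,c,z\}$) lies in some edge $e\subseteq C$, and $|e\cap e_1|\ge\ell-1$ makes $(e_1,e)$ a tight walk placing $A$ in $\mC$. Inserting this argument repairs your base case; in fact, once you have it, it proves the lemma directly for every $\ell$ and your induction on uniformity becomes unnecessary.
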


\begin{proof}
    By assumption, $G$ has a tight walk between $A_1$ and $A_2$. 
    By Lemma \ref{length-2-lemma-1}, there exists such a tight walk $W$ of length at most 2. If $W$ has length 1 then it consists of the single edge $e = A_1 \cup A_2 = B$, and thus, any $(\ell - 1)$-subset of $B$ clearly belongs to $\mC$. So suppose that $W$ has length 2 and write $W = (e_1,e_2)$, where 
    $A_1 \subseteq e_1$, $A_2 \subseteq e_2$ and $e_1 \neq e_2$. 
    Since $G$ is disperse, $G$ misses at most one edge on the $(\ell+1)$-set $C := e_1 \cup e_2$. 
    This implies that for every $(\ell-1)$-set $A \subseteq C$, there is an edge $e \in E(G)$ with $A \subseteq e \subseteq C$ (indeed, otherwise $C$ misses at least two edges). As $|e \cap e_1| \geq \ell-1$, we have a tight walk $(e_1,e)$ between $A_1$ and $A$. Hence, $A$ belongs to $\mC$, as required.  
\end{proof}


\begin{lemma}\label{dec-27-lemma-2-1}
    Let $G$ be a disperse $\ell$-graph and let $e_1, ..., e_m$ be a tight walk in $G$. Then all $(\ell - 1)$-subsets of $\bigcup_{i = 1}^me_i$ belong to the same tight component of $G$.
\end{lemma}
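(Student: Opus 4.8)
The plan is to induct on the length $m$ of the tight walk. For $m = 1$ the union is a single edge $e_1$ of size $\ell$, and by Lemma~\ref{dec-27-lemma-1} (or more simply, since any two $(\ell-1)$-subsets of an edge lie on that edge, hence in one tight component) the claim is immediate. For the inductive step, suppose the result holds for tight walks of length $m - 1$, and let $e_1, \dots, e_m$ be a tight walk. Write $U = \bigcup_{i=1}^{m} e_i$ and $U' = \bigcup_{i=1}^{m-1} e_i$. By induction, all $(\ell-1)$-subsets of $U'$ lie in a common tight component $\mC$. Since $e_{m-1}, e_m \in E(G)$ with $|e_{m-1} \cap e_m| \ge \ell - 1$, the edge $e_m$ itself has all its $(\ell-1)$-subsets in a common component, and because $e_{m-1} \cap e_m$ has size $\ge \ell - 1$, at least one $(\ell-1)$-subset of $e_m$ (namely one inside $e_{m-1} \cap e_m$, or $e_{m-1}$ itself if the intersection has size $\ell-1$) lies in $\mC$; hence all of $\binom{e_m}{\ell-1} \subseteq \mC$. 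So every $(\ell-1)$-subset contained \emph{entirely} in some single $e_i$ belongs to $\mC$.

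The remaining work is to handle $(\ell-1)$-subsets $A \subseteq U$ that are not contained in any one $e_i$. Here is where I would invoke the disperse hypothesis. Fix such an $A$. Since the $e_i$ are small (each of size $\ell$) and $U$ may be large, I cannot argue locally on a single $(\ell+1)$-set; instead I would build a tight walk from $A$ to some $(\ell-1)$-set already known to be in $\mC$. Concretely, pick a vertex $u \in A$ that lies in some $e_j$, and pick a vertex $w \in A \setminus e_j$ lying in some $e_k$. The idea is to "slide" along the tight walk: using Lemma~\ref{length-2-lemma-1} and the structure of tight walks, one can move from an $(\ell-1)$-set inside $e_j$ to an $(\ell-1)$-set inside $e_k$ through a tight walk, and along the way swap in the vertices of $A$ one at a time, each swap being justified by a single application of dispersity on an $(\ell+1)$-set. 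Alternatively — and probably cleaner — I would reduce to links: fix $v \in A$, note $A \setminus \{v\}$ has size $\ell - 2$, and use Lemma~\ref{main-lemma-1} together with the link $L(v)$, which is disperse by Lemma~\ref{lem:links are disperse}, to induct on uniformity $\ell$ as was done in Lemmas~\ref{length-2-lemma-1} and~\ref{dec-27-lemma-1}.

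Let me lean toward the second, induction-on-$\ell$ approach as the main line, since it matches the style of the surrounding lemmas. Base case $\ell = 3$: a tight walk $e_1, \dots, e_m$ in a $3$-graph has all its edges pairwise sharing $\ge 2$ vertices along consecutive steps; letting $U$ be the union, I want every pair $\{x,y\} \subseteq U$ to be in one tight component. Pick $v \in U$; I would show all pairs through $v$ are in one component of $L(v)$ (a cograph by Lemma~\ref{gishboliner-and-tomon-result}), and use connectivity of $L(v)$ plus the fact that consecutive $e_i$ overlap to stitch things together — essentially, $U$ being connected via a walk forces its pairs into one component. For general $\ell$: given the tight walk $e_1, \dots, e_m$ and a target $(\ell-1)$-set $A \subseteq U$, choose $v \in A$ with $v \in e_j$ for some $j$; the sub-walk of $e_i$'s containing $v$, after deleting $v$, gives tight walks in $L(v)$, and by Lemma~\ref{main-lemma-1} and the inductive hypothesis applied to $L(v)$ (which has uniformity $\ell - 1$), I can connect $A \setminus \{v\}$ to $(\ell-2)$-subsets of the relevant edges in $L(v)$, then lift back to $G$ by adding $v$. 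The bookkeeping to ensure every vertex of $A$ is reachable and that the walk segments genuinely cover all of $U$ is the part requiring care.

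The main obstacle I anticipate is exactly this covering/connectivity bookkeeping: a $(\ell-1)$-set $A$ may use vertices scattered across many different edges $e_i$, no single $e_i$ containing more than one or two of them, so there is no single $(\ell+1)$-set or single link in which the whole argument lives. The resolution is to process $A$ one vertex at a time — fixing a vertex, passing to its link, applying the $\ell-1$ case, and reassembling — and to use Lemma~\ref{dec-27-lemma-1} repeatedly to propagate membership in $\mC$ across overlapping edges. I expect the proof to be short modulo correctly setting up which link to use at each stage and verifying that the tight walk restricts appropriately to that link; the disperse condition enters only through the already-proved Lemmas~\ref{length-2-lemma-1}, \ref{dec-27-lemma-1}, and \ref{main-lemma-1}, so no new case analysis on $(\ell+1)$-sets should be needed.
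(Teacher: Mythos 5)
Your outer induction on $m$ and your closing remark that Lemma~\ref{dec-27-lemma-1} should ``propagate membership in $\mC$ across overlapping edges'' are the right instincts, but the main line you commit to --- induction on the uniformity $\ell$ via links --- has a genuine gap that you name but do not close. If you fix $v \in A$ and pass to $L(v)$, the only edges of the walk that survive as edges of $L(v)$ are those $e_i$ that contain $v$; the vertices of $A \setminus \{v\}$ may lie in edges $e_i$ that do not contain $v$, so the walk does not restrict to a tight walk in $L(v)$ whose union covers $A \setminus \{v\}$, and the inductive hypothesis for uniformity $\ell-1$ applied to whatever walk you do get in $L(v)$ says nothing about $A \setminus \{v\}$. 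Lemma~\ref{main-lemma-1} cannot rescue this, because it presupposes that $A$ is already connected in $G$ to the set you are comparing it with --- which is exactly what you are trying to prove.

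The argument that works stays entirely in $G$ and is a second, inner induction on how many vertices of the target set lie outside $e_m$. In the step from $m-1$ to $m$, let $x$ be the unique vertex of $e_m \setminus e_{m-1}$ and write $e_m = \{v_1,\dots,v_{\ell-1},x\}$; every new $(\ell-1)$-subset of the union contains $x$, so it suffices to show $\{w_1,\dots,w_{\ell-2},x\} \in \mC$ for arbitrary $w_1,\dots,w_{\ell-2} \in W := \bigcup_{i=1}^{m-1} e_i$. Set $A_j = \{w_1,\dots,w_j,v_{j+1},\dots,v_{\ell-2},x\}$. Then $A_0 \subseteq e_m$ is in $\mC$ (it shares the edge $e_m$ with $e_m \cap e_{m-1}$, which lies in $W$). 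For the step $j-1 \to j$, apply Lemma~\ref{dec-27-lemma-1} to the $\ell$-set $B = \{w_1,\dots,w_j,v_j,\dots,v_{\ell-2},x\}$: it contains the two distinct $(\ell-1)$-sets $A_{j-1}$ (in $\mC$ by the inner induction) and $\{w_1,\dots,w_j,v_j,\dots,v_{\ell-2}\} \subseteq W$ (in $\mC$ by the outer induction), so all of its $(\ell-1)$-subsets, in particular $A_j$, lie in $\mC$. This is the concrete ``swap one vertex at a time'' step your sketch is missing: the point is to choose, at each swap, an $\ell$-set already containing \emph{two} sets known to be in $\mC$, one from the previous swap and one lying entirely inside $W$. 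No links and no fresh dispersity case analysis are needed.
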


\begin{proof}
    We will prove this claim by induction on $m$. The claim is trivial when $m = 1$, as $e_1$ is a tight walk between any two $(\ell - 1)$-subsets of $e_1$. Now assume $m \geq 2$.
    By the induction hypothesis, there is a tight component $\mC$ containing all $(\ell - 1)$-subsets of $W := \bigcup_{i = 1}^{m - 1}e_i$. 
    If $e_m \subseteq \bigcup_{i=1}^{m-1} e_i$ then there is nothing to prove, so suppose otherwise. Let $x$ be the unique vertex in $e_m \setminus e_{m-1}$, and write 
    $e_m = \{v_1, v_2, \dots, v_{\ell - 1}, x\}$.
    It suffices to show that 
    for every $w_1,\dots,w_{\ell-2} \in W$, the
    $(\ell - 1)$-set 
    $\{w_1, w_2, ..., w_{\ell - 2}, x\}$ belongs to $\mC$.
    We now prove by induction on $j$ that for every $0 \leq j \leq \ell-2$,
    $A_j := \{w_1, ..., w_j, v_{j + 1}, ..., v_{\ell - 2}, x\} \in \mC$.
    For the base case $j=0$, note that $A_0 = \{v_1,\dots,v_{\ell-2},x\}$ is in the same tight component as $\{v_1,\dots,v_{\ell-1}\} \in \mC$, because both of these $(\ell-1)$-sets are contained in $e_m$. For the induction step, let $1 \leq j \leq \ell-2$. By the inductive hypothesis, 
    $A_{j-1} = \{w_1, ..., w_{j - 1}, v_j, v_{j + 1}, ..., v_{\ell - 2}, x\} \in \mC$.
    Also, $\{w_1, ..., w_j, v_j, v_{j + 1}, ..., v_{\ell - 2}\} \in \mC$ because all vertices of this $(\ell-1)$-set belong to $W$. 
    Hence, by Lemma \ref{dec-27-lemma-1} for the set 
    $B := \{w_1, ..., w_j, v_j, v_{j + 1}, ..., v_{\ell - 2}, x\}$,
    every $(\ell-1)$-subset of $B$, and in particular $A_j$, also belongs to $\mC$. This completes the induction step. Taking $j = \ell-2$, we get that $A_{\ell-2} = \{w_1,\dots,w_{\ell-2},x\} \in \mC$, as required.  
\end{proof}

\noindent
Using the above lemmas, we can now prove Theorem \ref{thm:tight components are cliques-1}.

\begin{proof}[Proof of Theorem \ref{thm:tight components are cliques-1}]
    
    Let $\mC$ be a tight component of $G$. 
    Let $U$ be the set of all $v \in V(G)$ such that $v \in A$ for some $A \in \mC$. We want to show that $\mC = \binom{U}{\ell-1}$. 
    So suppose for the sake of contradiction that there exists some $A \subseteq U$ of size $\ell-1$ such that $A \not \in \mC$. Let $A'$ be a largest subset of $A$ such that $A' \subseteq B$ for some $B \in \mC$. 
    As $A \notin \mC$, we have $|A'| < |A|$. So fix $x \in A \setminus A'$. 
    As $x \in U$, there is some $C \in \mC$ with $x \in C$ (by the definition of $U$).
    As $B,C \in \mC$, there is a tight walk $e_1,\dots,e_m$ in $G$ between $B$ and $C$. 
    By Lemma \ref{dec-27-lemma-2-1}, each $(\ell-1)$-tuple of vertices in $\bigcup_{i=1}^m e_i$ also belongs to $\mC$. 
    As $A' \cup \{x\} \subseteq B \cup C \subseteq \bigcup_{i=1}^m e_i$, there is an $(\ell-1)$-tuple $D \in \mC$ with $A' \cup \{x\} \subseteq D$. However, this contradicts the maximality of $A'$, completing the proof. 
\end{proof}




\section{Proof of Theorem \ref{thm:EH}}\label{sec:EH}

The following is a standard bound on the independence number of hypergraphs, due to Spencer \cite{Spencer}.

\begin{lemma}\label{spencer-ind-set-bound}
    Let $G$ be an $\ell$-graph with $n$ vertices and average degree $d$. Then 
    $$
    \alpha(G) \geq \frac{\ell-1}{\ell}n \cdot \min \{ 1,d^{-1/(\ell-1)} \}.
    $$
\end{lemma}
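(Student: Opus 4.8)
## Proof Strategy for Lemma~\ref{spencer-ind-set-bound}

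The plan is to use the standard probabilistic deletion (alteration) method, which is the content of Spencer's argument. First I would dispose of the easy case: if $d \leq 1$, then $\min\{1, d^{-1/(\ell-1)}\} = 1$, and the bound $\alpha(G) \geq \frac{\ell-1}{\ell} n$ would need to be checked; in fact when $d \leq 1$ the hypergraph has at most $n/\ell$ edges, and removing one vertex from each edge leaves an independent set on at least $n - n/\ell = \frac{\ell-1}{\ell}n$ vertices, so this case is immediate (and actually follows from the general argument below with $p=1$). So the substance is the case $d > 1$, where we must show $\alpha(G) \geq \frac{\ell-1}{\ell} n d^{-1/(\ell-1)}$.

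For the main case, I would pick a random subset $S \subseteq V(G)$ by including each vertex independently with probability $p$, where $p = d^{-1/(\ell-1)} \in (0,1]$ is chosen to optimize the final bound. Let $e(G) = dn/\ell$ be the number of edges (by the handshake/double-counting relation between average degree and edge count: $\sum_v \deg(v) = \ell \cdot e(G)$, so $e(G) = dn/\ell$). Then $\mathbb{E}[|S|] = pn$, and the expected number of edges of $G$ lying entirely inside $S$ is $e(G) p^{\ell}$. Deleting one vertex from each such edge yields an independent set $S'$ with
\[
\mathbb{E}[|S'|] \geq \mathbb{E}[|S|] - \mathbb{E}[e(G[S])] = pn - e(G)p^{\ell} = pn - \frac{dn}{\ell}p^{\ell}.
\]
Substituting $p = d^{-1/(\ell-1)}$ gives $p^{\ell} = p \cdot p^{\ell-1} = p \cdot d^{-1} = d^{-1/(\ell-1)}/d$, so $\frac{dn}{\ell}p^{\ell} = \frac{n}{\ell} p$, and the right-hand side becomes $pn - \frac{n}{\ell}p = \frac{\ell-1}{\ell} n p = \frac{\ell-1}{\ell} n d^{-1/(\ell-1)}$. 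Since some outcome meets the expectation, $\alpha(G) \geq \frac{\ell-1}{\ell} n d^{-1/(\ell-1)}$, as desired.

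There is no real obstacle here — this is a textbook application — but the one point requiring a little care is the bookkeeping that ties the average degree $d$ to the edge count $e(G) = dn/\ell$, and the verification that the chosen $p$ is a legitimate probability: since we are in the case $d > 1$ we have $p = d^{-1/(\ell-1)} < 1$, so the choice is valid. One should also note the two cases can be unified: taking $p = \min\{1, d^{-1/(\ell-1)}\}$ throughout and running the same computation (with the inequality $\frac{dn}{\ell} p^{\ell} \leq \frac{n}{\ell} p$ when $p \leq d^{-1/(\ell-1)}$, i.e.\ when $p=1 \leq d^{-1/(\ell-1)}$, equivalently $d \leq 1$) recovers the stated bound in both regimes simultaneously.
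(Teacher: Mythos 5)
Your proposal is correct and is essentially identical to the paper's proof: the same case split on whether $d \leq 1$ (delete one vertex per edge) versus $d > 1$ (random subset with $p = d^{-1/(\ell-1)}$ followed by deletion), with the same expectation computation. Nothing is missing.
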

\begin{proof}
    If $d < 1$ then $e(G) = \frac{nd}{\ell} < \frac{n}{\ell}$, and deleting one vertex per edge gives an independent set of size at least $\frac{\ell-1}{\ell}n$.
    Suppose now that $d \geq 1$. Sample a random subset $U \subseteq V(G)$ with probability $p := d^{-1/(\ell-1)} \leq 1$. Deleting one vertex per edge in $U$ gives an independent set of size at least $|U| - e(U)$. By linearity of expectation, we have $\mathbb{E}[|U| - e(U)] = pn - p^{\ell} \frac{dn}{\ell} = pn(1 - \frac{p^{\ell-1}d}{\ell}) = \frac{\ell-1}{\ell}pn = \frac{\ell-1}{\ell}nd^{-1/(\ell-1)}$. 
\end{proof}

It is well-known that every $n$-vertex cograph contains a homogeneous set of size at least $n^{1/2}$. The same proof applies to cohypergraphs, as follows. 

\begin{lemma}\label{homogeneous-bound-1}
        Every cohypergraph $G$ on $n$ vertices satisfies $\alpha(G) \cdot \omega(G) \geq n$ and hence 
        $\max(\alpha(G), \omega(G)) \geq n^{\frac{1}{2}}$.
    \end{lemma}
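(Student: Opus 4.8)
The plan is to prove the statement by induction on $n$, the number of vertices of the cohypergraph $G$. The base case $n = 1$ is trivial, since then $\alpha(G) = \omega(G) = 1$. For the inductive step, suppose $|V(G)| = n \geq 2$. By the definition of a cohypergraph, there is a vertex partition $V(G) = X \cup Y$ with $X, Y$ nonempty such that $G[X], G[Y]$ are cohypergraphs and $E(G)$ contains either all or none of the $\ell$-sets meeting both $X$ and $Y$. Write $|X| = n_1$ and $|Y| = n_2$, so $n_1 + n_2 = n$ and $n_1, n_2 \geq 1$.

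First I would treat the case where $E(G)$ contains \emph{none} of the $\ell$-sets intersecting both $X$ and $Y$. Then any independent set of $G[X]$ together with any independent set of $G[Y]$ forms an independent set of $G$: no edge can lie across the cut (by assumption), no edge lies inside the $X$-part (it is independent in $G[X]$), and none inside the $Y$-part. Hence $\alpha(G) \geq \alpha(G[X]) + \alpha(G[Y])$. On the other hand, any clique of $G$ must be entirely contained in $X$ or entirely in $Y$, since an $\ell$-set straddling the cut is a non-edge and so cannot sit inside a clique once $n \geq \ell$ (and for $n < \ell$ there are no edges at all, so $\omega(G)$ and the bound are easy to check directly); thus $\omega(G) = \max(\omega(G[X]), \omega(G[Y]))$. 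Combining with the inductive hypotheses $\alpha(G[X]) \cdot \omega(G[X]) \geq n_1$ and $\alpha(G[Y]) \cdot \omega(G[Y]) \geq n_2$, and using that the larger of $\omega(G[X]), \omega(G[Y])$ is at least as big as each, we get
\[
\alpha(G) \cdot \omega(G) \geq \bigl(\alpha(G[X]) + \alpha(G[Y])\bigr) \cdot \max(\omega(G[X]), \omega(G[Y])) \geq n_1 + n_2 = n.
\]
The complementary case, where $E(G)$ contains \emph{all} $\ell$-sets meeting both $X$ and $Y$, is handled by the dual argument: now any clique of $G[X]$ and any clique of $G[Y]$ together form a clique of $G$, so $\omega(G) \geq \omega(G[X]) + \omega(G[Y])$, while any independent set is confined to one side, giving $\alpha(G) = \max(\alpha(G[X]), \alpha(G[Y]))$; the same computation yields $\alpha(G) \cdot \omega(G) \geq n$. (Alternatively, one observes this case for $G$ is the first case for $\overline{G}$, which is again a cohypergraph with the same $\alpha, \omega$ roles swapped.) Finally, $\max(\alpha(G), \omega(G)) \geq \sqrt{\alpha(G) \cdot \omega(G)} \geq \sqrt{n}$, which is the second claimed inequality.

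I do not expect a serious obstacle here; the only point requiring a little care is the small-case bookkeeping when $n < \ell$ (there are no edges, so $G$ is both a clique-free-of-nontrivial-cliques situation in the hypergraph sense — really $\omega(G) = \min(n, \ell - 1)$ depending on convention, and $\alpha(G) = n$), and making sure the statements "every clique lies on one side" and "every independent set lies on one side" are applied in the correct case. Getting the direction of the cross-cut edges matched to whether we are bounding $\alpha$ or $\omega$ is the main thing to keep straight, but it is purely mechanical.
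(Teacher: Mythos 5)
Your proof is correct and follows essentially the same route as the paper: induction on $n$, splitting into the join and disjoint-union cases, using $\alpha(G)\ge\alpha(G[X])+\alpha(G[Y])$ together with $\omega(G)\ge\max(\omega(G[X]),\omega(G[Y]))$ (or the dual), and the same product estimate. The small-$n$ bookkeeping you worry about is unnecessary, since only the inequalities (not the equalities) are needed for the computation.
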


    \begin{proof}
        We prove this by induction on $n$. 
        The case $n=1$ is trivial, so suppose $n \geq 2$. By definition, we can write $V(G) = V(H_1) \cup V(H_2)$, where $H_1$ and $H_2$ are vertex-disjoint cohypergraphs, and $G$ has either all edges or no edges which intersect both $V(H_1)$ and $V(H_2)$. By the inductive hypothesis, 
        $\alpha(H_i) \cdot \nolinebreak \omega(H_i) \geq |V(H_i)|$ for $i=1,2$.
        Let us assume that $G$ has all edges which intersect both $V(H_1)$ and $V(H_2)$; the other case is symmetrical (by switching to $\overline{G}$). Then $\omega(G) = \omega(H_1) + \omega(H_2)$ and $\alpha(G) = \max(\alpha(H_1), \alpha(H_2))$, giving 
        \begin{align*}
            \alpha(G) \cdot \omega(G) &= \max(\alpha(H_1), \alpha(H_2)) \cdot (\omega(H_1) + \omega(H_2)) \\
            &\geq \alpha(H_1) \cdot \omega(H_1) + \alpha(H_2) \cdot \omega(H_2) \\
            &\geq |V(H_1)| + |V(H_2)| \\
            &= |V(G)| = n,
        \end{align*}
        as desired.
    \end{proof}

For a tight component $\mC$ of a disperse $\ell$-graph $G$, we denote by $V(\mC)$ the vertex set $U$ satisfying $\mC = \binom{U}{\ell-1}$, using Theorem \ref{thm:tight components are cliques-1}. The following lemma shows that if all tight components of $G$ are small, then $G$ has a large independent set.  

\begin{lemma}\label{case-one-bound}
    Let $G$ be a disperse $\ell$-graph on $n$ vertices, and suppose that for every tight component $\mC$ of $G$ it holds that $|V(\mC)| \leq m$. Then $\alpha(G) \geq \frac{\ell-1}{\ell} \cdot (n/m)^{\frac{1}{\ell-1}}$.
\end{lemma}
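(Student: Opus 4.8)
The plan is to bound the number of edges of $G$ in terms of $m$ and then apply Spencer's bound (Lemma~\ref{spencer-ind-set-bound}). The key observation is that every edge $e \in E(G)$ is an $\ell$-set, and each of the $\ell$ many $(\ell-1)$-subsets of $e$ lies in some tight component; moreover all of them lie in the \emph{same} tight component, since $e$ is a one-edge tight walk joining any two of them. Thus to each edge $e$ we can associate a unique tight component $\mC(e)$, namely the common component of the $(\ell-1)$-subsets of $e$, and $e \subseteq V(\mC(e))$. Consequently, $E(G) = \bigsqcup_{\mC} \{ e \in E(G) : e \subseteq V(\mC) \}$, and for each tight component $\mC$ the number of edges assigned to it is at most $\binom{|V(\mC)|}{\ell} \le \binom{m}{\ell} \le m^{\ell}/\ell!$. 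Actually a cleaner count: each edge $e$ assigned to $\mC$ is determined by choosing one of its $(\ell-1)$-subsets $A \in \mC$ together with the extra vertex, so the number of such $e$ is at most $|\mC| \cdot n$... but since $|\mC| = \binom{|V(\mC)|}{\ell-1} \le \binom{m}{\ell-1}$ and the components partition $\binom{V(G)}{\ell-1}$, summing $|\mC|$ over all components gives exactly $\binom{n}{\ell-1}$. So $e(G) \le \sum_{\mC} |\mC| \cdot (|V(\mC)| - (\ell-1)) \le (m - \ell + 1)\sum_{\mC} |\mC| \le m \binom{n}{\ell-1} \le m n^{\ell-1}/(\ell-1)!$, hence the average degree is $d = \ell\, e(G)/n \le m n^{\ell-2} \cdot \ell/(\ell-1)! \le m n^{\ell - 2}$ (being slightly loose with the constant; one can even get $d \le (m - \ell + 1) n^{\ell-2}$).

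Next I would feed this into Lemma~\ref{spencer-ind-set-bound}. We have $d \le m n^{\ell-2}$, so $d^{-1/(\ell-1)} \ge (m n^{\ell-2})^{-1/(\ell-1)} = m^{-1/(\ell-1)} n^{-(\ell-2)/(\ell-1)}$. Therefore
\[
\alpha(G) \ge \frac{\ell-1}{\ell}\, n \cdot \min\{1, d^{-1/(\ell-1)}\} \ge \frac{\ell-1}{\ell}\, n \cdot m^{-1/(\ell-1)} n^{-(\ell-2)/(\ell-1)} = \frac{\ell-1}{\ell}\, (n/m)^{1/(\ell-1)},
\]
using $n^{1 - (\ell-2)/(\ell-1)} = n^{1/(\ell-1)}$. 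This is exactly the claimed bound. (In the case $d < 1$ the $\min$ is $1$ and $\alpha(G) \ge \frac{\ell-1}{\ell} n \ge \frac{\ell-1}{\ell}(n/m)^{1/(\ell-1)}$ trivially since $m \ge 1$, so the estimate holds in all cases.)

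The only genuine content here is the first paragraph: the fact that the edges of a disperse hypergraph are partitioned among the tight components with edge $e$ going to the component of its $(\ell-1)$-subsets, and that this component has a bounded-size vertex set by hypothesis. The main thing to be careful about is that an edge $e$ really is assigned to a single well-defined component — this uses that all $\binom{\ell}{\ell-1}$ subsets of $e$ are pairwise tightly connected via the trivial one-edge walk $e$ itself, so they share a component $\mC$, and then $e \subseteq V(\mC)$ by the definition of $V(\mC)$ (Theorem~\ref{thm:tight components are cliques-1}). Everything after that is a routine counting-plus-Spencer computation, and I do not anticipate any obstacle; one just has to keep the constants honest enough to land on $\frac{\ell-1}{\ell}$, which the bound $e(G) \le \sum_\mC |\mC|\cdot(|V(\mC)| - \ell + 1) \le (m-\ell+1)\binom{n}{\ell-1}$ comfortably achieves.
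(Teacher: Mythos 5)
Your argument is the same as the paper's: assign each edge to the tight component containing its $(\ell-1)$-subsets, bound $e(G)$ using the fact that $\sum_{\mC}|\mC| = \binom{n}{\ell-1}$, and feed the resulting average-degree bound into Spencer's lemma. The only slip is in the constants: your count $\sum_{\mC}|\mC|\cdot(|V(\mC)|-\ell+1)$ counts each edge $\ell$ times (once for each of its $(\ell-1)$-subsets), and without dividing by $\ell$ your chain $d \le mn^{\ell-2}\cdot \ell/(\ell-1)! \le mn^{\ell-2}$ fails at the last step for $\ell=3$, where $\ell/(\ell-1)! = 3/2 > 1$, costing a factor $\sqrt{2/3}$ in the final bound. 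Dividing by $\ell$ to correct the overcount (equivalently, bounding the edges inside $V(\mC)$ by $\binom{|V(\mC)|}{\ell} \le \frac{m}{\ell}\binom{|V(\mC)|}{\ell-1}$, as the paper does) gives $e(G) \le \frac{m}{\ell}\binom{n}{\ell-1}$ and hence $d \le mn^{\ell-2}$ exactly as needed.
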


\begin{proof}
    First, we will bound the number of edges of $G$. Note that each edge of $G$ is contained in $V(\mC)$ for some tight component $\mC$. Thus, by summing over all tight components, we obtain that
    \begin{align*}
        e(G) &\leq \sum_{\mC}\binom{|V(\mC)|}{\ell} \\
        &\leq \frac{m}{\ell}\sum_{\mC}\binom{|V(\mC)|}{\ell - 1} \\
        &= \frac{m}{\ell}\binom{n}{\ell - 1},
    \end{align*}
    where the equality uses the fact that every $(\ell-1)$-subset of $V(G)$ is contained in $V(\mC)$ for exactly one tight component $\mC$, as the tight components partition $\binom{V(G)}{\ell-1}$. 
    Letting $d$ denote the average degree of $G$, we get 
    $$
    d = \frac{\ell \cdot  e(G)}{n} \leq \frac{m \binom{n}{\ell - 1}}{n} \leq m n^{\ell-2}.
    $$
    Thus, by Lemma \ref{spencer-ind-set-bound}, 
    \begin{align*}
        \alpha(G) &\geq \frac{\ell - 1}{\ell} \cdot \frac{n}{m^{\frac{1}{\ell-1}}n^{\frac{\ell-2}{\ell-1}}} = 
        \frac{\ell - 1}{\ell} \cdot \left( \frac{n}{m} \right)^{\frac{1}{\ell-1}},
    \end{align*}
    as desired. 
\end{proof}

     For two disjoint vertex sets $X,Y$ in an $\ell$-graph $G$ and for $1 \leq i \leq \ell-1$, denote by $E_G(X^i,Y^{\ell-i})$ the set of edges of $G$ having $i$ vertices in $X$ and $\ell-i$ vertices in $Y$, and let $e_G(X^i,Y^{\ell-i})$ be the number of such edges. An edge in $\bigcup_{i=1}^{\ell-1} E_G(X^i,Y^{\ell-i})$ is said to {\em cross} $(X,Y)$.

    \begin{lemma}\label{bound-for-c-1}
        Let $G$ be a disperse $\ell$-graph, and
        let $X,Y \subseteq V(G)$ be disjoint vertex sets with 
        $e_G(X^{\ell-1},Y^1) = 0$. 
        Let $A = \{v_1, \dots, v_{\ell - 1}\} \subseteq X \cup Y$, denote $i = |A \cap Y|$ and assume $i \geq 1$. Then there are 
        less than $2^{i - 1}$ edges in $E_G(X^{\ell-i},Y^i)$ containing $A$. 
    \end{lemma}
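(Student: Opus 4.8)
The plan is to prove the bound by induction on $i$, keeping the uniformity $\ell$ fixed throughout. The first step is a reduction. If $e$ is an edge of $E_G(X^{\ell-i},Y^i)$ with $A\subseteq e$, then $e\cap Y$ and $A\cap Y$ both have size $i$, so $e\cap Y=A\cap Y$, whence $e=A\cup\{x\}$ for a unique $x\in X$. Thus it suffices to show that the set $S:=\{x\in X: A\cup\{x\}\in E(G)\}$ has size less than $2^{i-1}$ (note $x\notin A$ for $x\in S$, since $|A\cup\{x\}|=\ell$). The base case $i=1$ is immediate: every such edge $A\cup\{x\}$ lies in $E_G(X^{\ell-1},Y^1)$, which is empty by hypothesis, so $|S|=0<2^0$.

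For the inductive step $i\ge 2$ I may assume $S\neq\emptyset$ and fix some $x\in S$. Since $|A\cap Y|=i\ge 2$, I pick two distinct vertices $y,y'\in A\cap Y$. The two auxiliary sets I will feed into the induction hypothesis are $A_1:=(A\setminus\{y\})\cup\{x\}$ and $A_2:=(A\setminus\{y'\})\cup\{x\}$; each has size $\ell-1$ and exactly $i-1$ of its vertices in $Y$ (we remove a $Y$-vertex and add the $X$-vertex $x$). I split $S\setminus\{x\}$ into the ``close'' vertices $x'$ with $(A\setminus\{y\})\cup\{x,x'\}\in E(G)$ and the ``far'' vertices $x'$ with $(A\setminus\{y\})\cup\{x,x'\}\notin E(G)$. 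A close $x'$ yields the edge $A_1\cup\{x'\}\in E_G(X^{\ell-(i-1)},Y^{i-1})$ containing $A_1$, so by the induction hypothesis (applied with parameter $i-1$) there are fewer than $2^{i-2}$ close vertices.

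To bound the far vertices I use dispersion on the $(\ell+1)$-set $P:=A\cup\{x,x'\}$: it contains the two edges $A\cup\{x\}$ and $A\cup\{x'\}$, so $e_G(P)\ge 2$ forces $e_G(P)\in\{\ell,\ell+1\}$, i.e.\ $P$ misses at most one of its $\ell$-subsets. For a far $x'$ the $\ell$-subset $(A\setminus\{y\})\cup\{x,x'\}=P\setminus\{y\}$ is a non-edge, hence it is the unique missing one; in particular $P\setminus\{y'\}=(A\setminus\{y'\})\cup\{x,x'\}=A_2\cup\{x'\}$ is an edge, and it lies in $E_G(X^{\ell-(i-1)},Y^{i-1})$ and contains $A_2$. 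By the induction hypothesis there are fewer than $2^{i-2}$ far vertices. Summing the two parts, $|S|-1\le(2^{i-2}-1)+(2^{i-2}-1)=2^{i-1}-2$, so $|S|<2^{i-1}$, as required.

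The main thing to get right is the vertex-counting in $X$ versus $Y$: one must check that $A_1$ and $A_2$ really have exactly $i-1$ vertices in $Y$ (so that the induction hypothesis is invoked with the exponent $2^{(i-1)-1}=2^{i-2}$ and not something weaker), and that $y'\ne y$ — which forces the inductive step to require $i\ge 2$ and hence the base case $i=1$ to be treated separately. The remaining steps are routine; the only other point worth noting is that the strict bound $|S|<2^{i-1}$ emerges only after passing to the integral form ``$\le 2^{i-2}-1$'' of the inductive bound on each of the two parts.
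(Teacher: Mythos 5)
Your proof is correct and follows essentially the same strategy as the paper's: induct on $i$, fix one vertex $x$ of the candidate set, and use dispersion on $A\cup\{x,x'\}$ to show each remaining $x'$ produces an edge through one of two $(\ell-1)$-sets with only $i-1$ vertices in $Y$, then invoke the induction hypothesis. The paper phrases this as a red/blue colouring plus pigeonhole and derives a contradiction, whereas you bound both classes directly, but the underlying argument is the same.
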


    \begin{proof}
        We prove the claim by induction on $i$. The base case $i=1$ holds by assumption as $e_G(X^{\ell - 1}, Y^1) = 0$. Now let $2 \leq i \leq \ell-1$ and suppose that the result holds for $i-1$. 
        Without loss of generality, assume that $v_1,v_{\ell-1} \in Y$ (note that $|A \cap Y| = i \geq 2$).
        Suppose, for the sake of contradiction, that there are at least $2^{i - 1}$ edges in $E_G(X^{\ell-i},Y^i)$ which contain $A$. As $|A \cap Y| = i$, each such edge consists of $A$ and a vertex from $X$. So let $X_A = \{x \in X \setminus A : \{x\} \cup A \in E(G)\}$ denote the set of vertices in $X$ such that $\{x\} \cup A \in E_G(X^{\ell-i},Y^i)$. Then $|X_A| \geq 2^{i - 1}$. Observe that for any $x_1, x_2 \in X_A$, the set of vertices $\{x_1, x_2\} \cup A$ contains at least two edges (namely, the edges $\{x_j\} \cup A$ for $j=1,2$), and thus, avoids at most one edge (as $G$ is disperse). 
        Color the pair $x_1x_2 \in \binom{X_A}{2}$ blue if $\{x_1, x_2, v_2, \dots, v_{\ell-1}\} \in E(G)$, and red otherwise. So, if $x_1x_2$ is colored red, then $\{x_1, x_2, v_1, \dots, v_{\ell - 2}\} \in E(G)$. Fix any $v \in X_A$. By pigeonhole, at least $\lceil \frac{|X_A|-1}{2} \rceil \geq 2^{i-2}$ of the edges touching $v$ have the same color. 
        Without loss of generality, suppose this color is red. That is, $\{v, x, v_1, \dots, v_{\ell - 2}\} \in E(G)$ for at least $2^{i-2}$ vertices $x$. 
        Note that the set $A' := \{v_1, \dots, v_{\ell - 2}, v\}$ satisfies $|A' \cap Y| = i-1$, because $v_{\ell-1} \in Y$ and $v \in X$. 
        Hence, each edge $\{v, x, v_1, \dots, v_{\ell - 2}\}$ as above belongs to $E_G(X^{\ell - i + 1}, Y^{i - 1})$.
        Therefore, the number of edges in $E_G(X^{\ell - i + 1}, Y^{i - 1})$ containing $A'$ is at least $2^{i - 2}$, 
        a contradiction to the induction hypothesis.  
    \end{proof}
    

\noindent
Finally, we prove Theorem \ref{thm:EH}. 

\begin{proof}[Proof of Theorem \ref{thm:EH}]
    Set $\epsilon = \frac{\ell + 1}{3\ell - 1}, \gamma = \frac{2}{3\ell - 1}$. We first define an algorithm to maintain a partition $\mathcal{P}$ of $V(G)$ and a set of $\ell$-tuples $\mathcal{B} \subseteq \binom{V(G)}{\ell}$, as follows.
    
    \begin{algorithm}[H]
    \caption{PARTITION-ALGORITHM}
    \begin{algorithmic}[1]
        \State \texttt{Initialize $\mathcal{P} = \{V(G)\}$}
        \While {\texttt{$\exists W \in \mathcal{P} \text{ such that } |W| \geq n^{1 - \gamma}$},}
            \State 
            \texttt{\text{Let $H \in \big\{G[W], \overline{G[W]}\big\}$ such that $H$ is not tightly connected (Theorem \ref{thm:not tightly connected})}}
            \If{\texttt{\text{All tight components} $\mC$ \text{of} $H$ \text{satisfy} $|V(\mC)| \leq n^{1 - \epsilon}$,}}
                \State \texttt{\text{Terminate}}
            \Else
                \State \texttt{\text{Partition} $W = X \cup Y$ \text{where} $|X| \geq n^{1 - \epsilon}$ \text{and} $X = V(\mC)$ \text{for a tight component $\mC$ of} $H$}
                \State \texttt{\text{Replace} $W$ \text{with} $X, Y$ in $\mathcal{P}$}
                \State \texttt{\text{Add to} $\mathcal{B}$ all edges in $E_{H}(X^i,Y^{\ell-i})$ for every $1 \leq i \leq \ell-2$}
            \EndIf
        \EndWhile
        \For{\texttt{$W \in \mathcal{P}$}}
            \State \texttt{\text{Add} $\binom{W}{\ell}$ \text{to} $\mathcal{B}$}
        \EndFor
    \end{algorithmic}
    \end{algorithm}

    \noindent
    We first give the following claim, which will be used later.

    \begin{claim}\label{extracted-claim}
        Let $U \subseteq V(G)$. If $U$ contains no $\ell$-tuple from $\mathcal{B}$, then $G[U]$ is a cohypergraph.
    \end{claim}

    \begin{proof}
        Observe that at each step of the algorithm, the hypergraph $H$ (Line 3) has no edges which cross $(X \cap U, Y \cap U)$ (because all such edges are added to $\mB$). This implies that $G$ contains all or none of the edges crossing $(X \cap U, Y \cap U)$ (by the choice of $H$). Moreover, $|U \cap W| \leq \ell-1$ for every $W \in \mathcal{P}$ (here $\mathcal{P}$ is the partition at the end of the algorithm), because $U$ contains no $\ell$-tuple from $\mB$, whereas $\binom{W}{\ell} \subseteq \mB$. It follows that $G[U]$ is a cohypergraph obtained by starting with the empty hypergraphs $G[U \cap W]$, $W \in \mathcal{P}$, and repeatedly joining two of the parts (by backtracking the algorithm).
    \end{proof}
        

    Note that in Line 9 of the algorithm, we have $E_H(X^{\ell-1},Y^1) = \emptyset$, because $X$ is the vertex set of a tight component $\mC$ of $H$. Indeed, if there were an edge $\{x_1,\dots,x_{\ell-1},y\} \in E_H(X^{\ell-1},Y^1)$, then we would have $y \in V(\mC) = X$, a contradiction.

    If the algorithm terminates in the first condition (Lines 4-5), then there is some set $W \subseteq V(G)$ with $|W| \geq n^{1 - \gamma}$ such that in either $G[W]$ or $\overline{G[W]}$, all tight components have size at most $n^{1-\epsilon}$. Thus, by Lemma \ref{case-one-bound} (with $m = n^{1-\epsilon}$), we have
    \begin{align*}
        \max(\alpha(G), \omega(G)) 
        &\geq 
        \frac{\ell-1}{\ell} \cdot \left( \frac{|W|}{m} \right)^{\frac{1}{\ell-1}} \\
        &\geq 0.5 n^{\frac{\epsilon - \gamma}{\ell - 1}} \\
        &= 0.5n^{\frac{1}{3\ell - 1}},
    \end{align*}
    using our choice of $\epsilon,\gamma$.
    Hence, we may assume from now on that the algorithm does not terminate on the first condition.  
    
    Thus, by Claim \ref{extracted-claim}, our goal from now on is to find a large set $U \subseteq V(G)$ which is independent in the hypergraph with edge-set $\mB$. To this end, we upper-bound $|\mB|$. 
    Let $\mB_1$ (resp. $\mB_2$) be the set of $\ell$-tuples added to $\mB$ in Line 9 (resp. Line 13) of the algorithm. 

    \begin{claim}\label{bound-1}
        $|\mB_1| \leq 2^{\ell} n^{\ell - 1 + \epsilon}$.
    \end{claim}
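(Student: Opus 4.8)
The plan is to bound the contribution to $\mB_1$ made at each iteration of the while-loop, and then sum over all iterations. Fix one iteration, with the set $W$ split as $W = X \cup Y$, where $X = V(\mC)$ for a tight component $\mC$ of $H$. At this step we add to $\mB_1$ all edges in $E_H(X^i,Y^{\ell-i})$ for $1 \leq i \leq \ell-2$; equivalently (swapping the roles of $X$ and $Y$ in the notation of Lemma \ref{bound-for-c-1}), these are exactly the edges of $H$ crossing $(X,Y)$ that have between $1$ and $\ell-2$ vertices in $X$, i.e. between $2$ and $\ell-1$ vertices in $Y$. Since $E_H(X^{\ell-1},Y^1) = \emptyset$ by the remark following the algorithm, Lemma \ref{bound-for-c-1} applies: for any $(\ell-1)$-set $A \subseteq X \cup Y$ with $j := |A \cap Y| \geq 1$, there are fewer than $2^{j-1} \leq 2^{\ell-2}$ edges of $H$ in $E_H(X^{\ell-j},Y^{j})$ containing $A$.

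First I would count, for the fixed iteration, the edges added in terms of their $(\ell-1)$-subsets lying in $Y$ (or more conveniently, any fixed $(\ell-1)$-subset of the edge that meets $Y$ in at least one vertex). Each edge $e$ counted at this step satisfies $1 \leq |e \cap Y| \leq \ell-1$, hence $e$ contains some $(\ell-1)$-set $A$ with $|A \cap Y| \geq 1$; by Lemma \ref{bound-for-c-1} each such $A$ extends to fewer than $2^{\ell-2}$ such edges. Summing over the at most $\binom{|W|}{\ell-1} \leq \binom{n}{\ell-1} \leq n^{\ell-1}$ choices of $A \subseteq W$, the number of edges added to $\mB_1$ at this iteration is at most $2^{\ell-2} n^{\ell-1}$ (with a bit of slack to absorb the overcounting of each edge by its several $(\ell-1)$-subsets, which only helps).

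Next I would bound the number of iterations of the while-loop. Each iteration that reaches Line 7 produces a new part $X$ of size $|X| \geq n^{1-\epsilon}$ which is never split again (it does not satisfy the while-condition, since $n^{1-\epsilon} < n^{1-\gamma}$ is false — wait, we need $|X| \geq n^{1-\epsilon}$ but it could still be $\geq n^{1-\gamma}$; more carefully, $X = V(\mathcal{C})$ with $|V(\mathcal{C})| \leq n^{1-\epsilon}$ would have triggered termination, so in fact $|X| > n^{1-\epsilon}$, but this does not by itself bound the number of iterations). Instead I would argue that the parts of $\mathcal{P}$ are always disjoint subsets of $V(G)$, and at each iteration a part $W$ with $|W| \geq n^{1-\gamma}$ is replaced by $X$ with $|X| \geq n^{1-\epsilon}$ and $Y$; tracking the part $X$ split off, these $X$'s are pairwise disjoint across iterations (each is a subset of the $W$ being split, and once split off, $X$ is only ever further subdivided), so the total number of iterations is at most $n / n^{1-\epsilon} = n^{\epsilon}$. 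Combining, $|\mB_1| \leq n^{\epsilon} \cdot 2^{\ell-2} n^{\ell-1} \leq 2^{\ell} n^{\ell-1+\epsilon}$.

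The main obstacle I expect is bounding the number of iterations: one must be careful that the sets $X = V(\mathcal{C})$ split off at distinct iterations are genuinely disjoint (so that their sizes, each at least $n^{1-\epsilon}$, sum to at most $n$). This follows because $\mathcal{P}$ is always a partition of $V(G)$ and each iteration only refines it, so the $X$ removed at one step lies inside a single part and is itself partitioned (never merged) at all later steps; hence the family of all such $X$'s (one per iteration) consists of pairwise disjoint sets. A secondary, purely bookkeeping point is to make sure the per-iteration count $2^{\ell-2}n^{\ell-1}$ correctly matches the indexing in Lemma \ref{bound-for-c-1} — there the set $X$ plays the role of the ``large'' side with no edges into a single vertex of it, whereas in the algorithm it is $X$ (as $V(\mathcal{C})$) that has $E_H(X^{\ell-1},Y^1) = \emptyset$, so one applies the lemma with the names $X,Y$ swapped and $i = |A \cap X| \leq \ell-1$; this swap is harmless since $2^{i-1} \leq 2^{\ell-2}$ regardless.
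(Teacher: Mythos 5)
Your per-iteration count is fine (each $(\ell-1)$-set $A\subseteq W$ with $A\cap Y=e\cap Y$ absorbs fewer than $2^{|A\cap Y|-1}\le 2^{\ell-2}$ of the added edges, and no swap of $X$ and $Y$ is needed in Lemma \ref{bound-for-c-1} --- the algorithm's $X=V(\mC)$ is exactly the set with $e_H(X^{\ell-1},Y^1)=0$, as the lemma is stated). The genuine gap is in your bound on the number of iterations. The sets $X$ split off at distinct iterations are \emph{not} pairwise disjoint: when $W$ is replaced by $X,Y$, the new part $X$ may well satisfy $|X|\ge n^{1-\gamma}$, re-enter the while-loop, and be split again into $X'\cup Y'$ with $X'\subseteq X$. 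Your own phrase ``once split off, $X$ is only ever further subdivided'' is precisely the reason the $X$'s form a nested chain rather than a disjoint family. In the worst case the algorithm peels off a single vertex into $Y$ at each step, giving $\Theta(n)$ iterations; your product bound then yields only $O(2^{\ell}n^{\ell})$, which is far weaker than the claimed $2^{\ell}n^{\ell-1+\epsilon}$ and would make the average degree of $\mB$ of order $n^{\ell-1}$, destroying the application of Lemma \ref{spencer-ind-set-bound} later in the proof.

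The fix requires reversing the order of summation, which is what the paper does: fix the $(\ell-1)$-set $A$ first and bound the number of iterations at which $A$ can absorb an edge. For such iterations one has $A\subseteq W_i=X_i\cup Y_i$ and $A\cap Y_i\neq\emptyset$; since the sets handled by the algorithm form a laminar family, the relevant $W_i$ are nested, and each must sit inside the previous $Y_{i-1}$ (it cannot sit inside $X_{i-1}$, as $A$ meets $Y_{i-1}$). Hence $|Y_i|\le|Y_{i-1}|-|X_i|\le|Y_{i-1}|-n^{1-\epsilon}$, so each fixed $A$ is involved in at most $n^{\epsilon}$ iterations, contributing $O(2^{\ell})$ edges per iteration. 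Summing over the $\binom{n}{\ell-1}$ choices of $A$ gives the claim. The point is that while the total number of iterations can be linear in $n$, the number of iterations \emph{seen by any single $(\ell-1)$-set} is at most $n^{\epsilon}$; your argument conflates these two quantities.
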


    \begin{proof}
        We will bound $|\mB_1|$ as follows. For each pair of sets $(X,Y)$ obtained in Line 7 of the algorithm, and each edge of $H$ $e$ which crosses $(X,Y)$, we assign to $e$ an $(\ell-1)$-subset $A \subseteq e$ with $A \cap Y = e \cap Y$ (if there are several such $A$, namely if $|e \cap Y| \leq \ell-2$, then choose one such $A$ arbitrarily). In other words, for any given $(\ell-1$)-set $A$, we will bound the number of edges $e$ of $H$ crossing $(X,Y)$ which contain $A$ and satisfy $A \cap Y = e \cap Y$. Summing over all $A \in \binom{V(G)}{\ell-1}$ and all choices for $(X,Y)$ will give us the desired bound on \nolinebreak $|\mB_1|$.
        
        So let $A \subseteq V(G)$ be an arbitrary set of size $\ell-1$. Let $(X_1, Y_1), \dots, (X_t, Y_t)$ be all pairs of sets $(X,Y)$ obtained in the course of the algorithm for which $A$ crosses $(X,Y)$, 
        ordered according to when the algorithm processed them. 
        Since any two sets handled by the algorithm are disjoint or nested, and since all $Y_i$ intersect $A$, we must have
        $Y_1 \supseteq Y_2 \supseteq \dots \supseteq Y_t$. 
        Also, since $|X_i| \geq n^{1-\epsilon}$ for every $i$ (see Line 7 of the algorithm), we have $|Y_i| \leq |Y_{i - 1}| - n^{1 - \epsilon}$ for all $2 \leq i \leq t$, and thus, $t \leq n^{\epsilon}$. Now, fix any $1 \leq i \leq t$. As $e_H(X_i^{\ell-1},Y_i^1) = 0$, Lemma \ref{bound-for-c-1} implies that the number of edges $e$ of $H$ crossing $(X_i,Y_i)$ and satisfying $A \cap Y_i = e \cap Y_i$ is less than $\sum_{i=1}^{\ell-1} 2^{i - 1} = 2^{\ell - 1} - 1$. Summing over all choices of $A \in \binom{V(G)}{\ell-1}$ and $i=1,\dots,t$, we get that


        \begin{align*}
            |\mB_1| &\leq \binom{n}{\ell-1} \cdot n^{\epsilon} \cdot 2^{\ell} \\
            &\leq 2^{\ell} n^{\ell - 1 + \epsilon},
        \end{align*}
        as desired.
    \end{proof}

    \begin{claim}\label{bound-2}
        $|\mB_2| \leq n^{\ell - (\ell - 1)\gamma}$.
    \end{claim}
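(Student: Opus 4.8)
The plan is to exploit the fact that Claim \ref{bound-2} is invoked precisely in the branch where the \texttt{while} loop of PARTITION-ALGORITHM has been exited without triggering the early termination on Lines 4--5, so that every part of the final partition is small. First I would check that the loop does halt: in any iteration that does not terminate, the set $W$ is removed from $\mathcal{P}$ and replaced by $X = V(\mC)$ and $Y = W \setminus X$, and $Y \neq \emptyset$ because $H$ is not tightly connected (so $\mC \neq \binom{V(H)}{\ell-1}$, hence $V(\mC) \subsetneq V(H) = W$). Thus $|\mathcal{P}|$ strictly increases at each non-terminating step, while $|\mathcal{P}| \le n$ always, so the loop runs for finitely many steps. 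Since we are in the case where the algorithm does not terminate on the first condition, the loop must exit because the \texttt{while} guard fails, i.e. every $W \in \mathcal{P}$ in the final partition satisfies $|W| < n^{1-\gamma}$.

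Next I would bound $|\mB_2|$ by a one-line sum. By construction $\mB_2 = \bigcup_{W \in \mathcal{P}} \binom{W}{\ell}$, the union taken over the final partition $\mathcal{P}$, so
\[
|\mB_2| \le \sum_{W \in \mathcal{P}} \binom{|W|}{\ell} \le \sum_{W \in \mathcal{P}} \frac{|W|^{\ell}}{\ell!} \le \frac{(n^{1-\gamma})^{\ell-1}}{\ell!} \sum_{W \in \mathcal{P}} |W| = \frac{n^{1 + (1-\gamma)(\ell-1)}}{\ell!},
\]
where the third inequality uses $|W| < n^{1-\gamma}$ for each part and the final equality uses $\sum_{W \in \mathcal{P}} |W| = |V(G)| = n$ since $\mathcal{P}$ is a partition of $V(G)$. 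Since $1 + (1-\gamma)(\ell-1) = \ell - (\ell-1)\gamma$ and $\ell! \ge 1$, this gives $|\mB_2| \le n^{\ell - (\ell-1)\gamma}$, which is exactly the claim.

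I do not expect any genuine obstacle in this step; it is essentially a counting argument. The only points needing (minor) care are confirming that the loop halts and that each non-terminating iteration splits a part into two strictly smaller nonempty parts, and recording that Claim \ref{bound-2} is used only under the standing assumption that the algorithm did not terminate early, which is what makes the smallness bound $|W| < n^{1-\gamma}$ available for every part $W$ of the final partition.
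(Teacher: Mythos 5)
Your proof is correct and follows essentially the same route as the paper: bound $\binom{|W|}{\ell}$ by $|W|^{\ell-1}\cdot|W| \le (n^{1-\gamma})^{\ell-1}|W|$ for each part of the final partition and sum, using $\sum_{W}|W|=n$. The extra verification that the loop terminates and that the guard's failure gives $|W|<n^{1-\gamma}$ for every final part is a reasonable (if routine) addition that the paper leaves implicit.
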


    \begin{proof}
        By the definition of the algorithm, we have $|W| \leq n^{1-\gamma}$ for every $W \in \mP$ (where $\mP$ denotes the partition at the end of the algorithm).
        Hence,
        \begin{align*}
            |\mB_2| &\leq \sum_{W \in \mathcal{P}}\binom{|W|}{\ell} \\
            &\leq (n^{1 - \gamma})^{\ell - 1} \cdot 
            \sum_{W \in \mathcal{P}}|W| \\
            &= n^{\ell - (\ell - 1)\gamma},
        \end{align*}
        as desired.
    \end{proof}

    We now complete the proof of the theorem. Consider the auxiliary $\ell$-graph on the vertex set $V(G)$ and edge set $\mB = \mB_1 \cup \mB_2$. Let us denote this hypergraph by $\mB$ as well. By Claims \ref{bound-1} and \ref{bound-2}, 

    \begin{align*}
        |E(\mB)| &\leq 2^{\ell} n^{\ell - 1 + \epsilon} + n^{\ell - (\ell - 1)\gamma} \\
        &= 2^{\ell} n^{\ell - 1 + \frac{\ell + 1}{3\ell - 1}} + n^{\ell - (\ell - 1) \cdot \frac{2}{3\ell - 1}} \\
        &= 2^{\ell} n^{\frac{3\ell^2 - 3\ell - \ell + 1 + \ell + 1}{3\ell - 1}} + n^{\frac{3\ell^2 - \ell - 2\ell + 2}{3\ell - 1}} \\
        &= (2^{\ell} + 1)n^{\frac{3\ell^2 - 3\ell + 2}{3\ell - 1}}.
    \end{align*}
    Hence, the average degree of $\mB$ is at most

    $$O\big( n^{\frac{3\ell^2 - 3\ell + 2}{3\ell - 1}-1} \big) = O\big(n^{\frac{3\ell^2 - 6\ell + 3}{3\ell - 1}}\big) = O\big(n^{\frac{3(\ell - 1)^2}{3\ell - 1}}\big).$$
    Thus, by Lemma \ref{spencer-ind-set-bound}, 
    

    \begin{align*}
        \alpha(\mB) &\geq \Omega\Big( 
        n^{1 - \frac{3(\ell - 1)^2}{3\ell - 1} \cdot \frac{1}{\ell - 1}} \Big) \\
        &= \Omega(n^{1 - \frac{3\ell - 3}{3\ell - 1}}) \\
        &= \Omega(n^{\frac{2}{3\ell - 1}}).
    \end{align*}

    Thus, by Claim \ref{extracted-claim}, $G$ contains an induced cohypergraph of size $\Omega(n^{\frac{2}{3\ell - 1}})$. By Lemma \ref{homogeneous-bound-1},

    \begin{align*}
        \max( \alpha(G), \omega(G) ) &\geq 
        \Omega(n^{\frac{1}{3\ell - 1}}),
    \end{align*}
    as desired.
    %
\end{proof}

Note that the above proof in fact shows that every $n$-vertex disperse $\ell$-graph can be made into a cohypergraph by adding/deleting 
$O(n^{\frac{3\ell^2-3\ell+2}{3\ell-1}}) = O(n^{\ell - \frac{2\ell-2}{3\ell-1}}) = o(n^{\ell})$ edges. Indeed, changing the $\ell$-tuples in $\mathcal{B}$ in an appropriate way yields a cohypergraph.

\section{Concluding remarks}\label{sec:concluding}
For a set $L \subseteq \{0,1,\dots,\ell+1\}$, let us call an $\ell$-graph {\em $L$-free} if it contains no induced $(\ell+1)$-vertex subgraph whose number of edges belongs to $L$. Thus, an $\ell$-graph is disperse if and only if it is $\{2,3,\dots,\ell-1\}$-free. Our main result is that disperse $\ell$-graphs have polynomial-size homogeneous sets. 
\begin{problem}
Is there any proper subset $L \subsetneq \{2,3,\dots,\ell-1\}$ for which $L$-free $\ell$-graphs have polynomial-size homogeneous sets? 
\end{problem}
\noindent
This trivially fails for $\ell=3$, and can also be shown to fail for $\ell=4$; but such an $L$ could exist for $\ell \geq 5$. 

By combining Lemmas \ref{lem:links are disperse} and \ref{gishboliner-and-tomon-result}, we get that in a disperse $\ell$-graph $G$, for every $\ell-2$ distinct vertices $v_1,\dots,v_{\ell-2}$, the link graph $L(v_1,\dots,v_{\ell-2})$ is a cograph (this graph consists of all pairs $xy$ such that $\{v_1,\dots,v_{\ell-2},x,y\} \in E(G)$). Thus, the following would be a strengthening of Theorem \ref{thm:EH}.

\begin{problem}
    Is there $c_{\ell} > 0$ such that if $G$ is an $n$-vertex $\ell$-graph in which all links $L(v_1,\dots,v_{\ell-2})$ are cographs, then $G$ has a homogeneous set of size at least $n^{c_{\ell}}$?
\end{problem}

Another well-studied graph class is that of split graphs. A graph is {\em split} if it has a vertex partition $X,Y$ such that $X$ is a clique and $Y$ is an independent set. One can ask for the size of homogeneous sets in $3$-graphs in which every link is split. It turns out that such 3-graphs might only have homogeneous sets of logarithmic size, due to the following variant of a well-known construction. Take a random graph $F \sim G(n,\frac{1}{2})$, and define a 3-graph $G$ on $V(F)$ in which $xyz$ is an edge if $e_F(\{x,y,z\}) \geq 2$. It is easy to show, using standard arguments, that $\omega(G),\alpha(G) = O(\log n)$. Also, for every vertex $v$, $N_F(v)$ is a clique in the link of $v$, and $V(F) \setminus (N_F(v) \cup \{v\})$ is an independent set in the link of $v$. Hence each link is split.

We can also show that the above construction is inevitable, in the sense that if $G$ is a 3-graph where all links are split, then $G$ contains a large vertex-set which induces the above construction.
\begin{proposition}\label{lem:split graph links}
		Let $G$ be an $n$-vertex 3-graph in which every link is split. Then there is $U \subseteq V(G)$, $|U| \geq n^{1/3}$, and a graph $F$ on $U$, such that for every distinct $x,y,z \in U$, $xyz \in E(G)$ if and only if $e_F(\{x,y,z\}) \geq 2$. 
\end{proposition}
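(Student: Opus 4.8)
The plan is to induct on $n=|V(G)|$. Write $M(F)$ for the $3$-graph on $V(F)$ in which $xyz$ is an edge iff $e_F(\{x,y,z\})\ge 2$; thus we must show that $G$ contains an induced copy of some $M(F)$ on at least $n^{1/3}$ vertices. Two features make such an induction plausible. First, induced subgraphs of split graphs are split, so every link of every $G[W]$ is again split, and the induction hypothesis applies to $G[W]$. Second, the family $\{M(F)\}$ is closed under complementation: a triple spans at most one edge of $F$ iff it spans at least two edges of $\overline F$, so $\overline{M(F)}=M(\overline F)$ (and ``all links split'' is likewise complementation-invariant). The base case $n<64$ is immediate, since then $n^{1/3}<4$ and any $3$-set of vertices is realizable --- a $3$-set spanning an edge of $G$ equals $M(K_3)$, and one spanning no edge equals $M(\overline{K_3})$.

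For the inductive step, fix a vertex $r$ and a split partition $V(G)\setminus\{r\}=C_r\cup I_r$ of $L(r)$, with $C_r$ a clique and $I_r$ an independent set of $L(r)$; complementing $G$ if necessary (which interchanges the two sides) we may assume $|C_r|\ge|I_r|$. Apply the induction hypothesis to $G[C_r]$ and to $G[I_r]$ to obtain $U_C\subseteq C_r$, $U_I\subseteq I_r$ and graphs $F_C$ on $U_C$, $F_I$ on $U_I$ with $G[U_C]=M(F_C)$ and $G[U_I]=M(F_I)$. The natural candidate is $U=\{r\}\cup U_C\cup U_I$ with $F$ defined by $F[U_C]=F_C$, $F[U_I]=F_I$, making $r$ $F$-adjacent to all of $U_C$ and to none of $U_I$, and --- as is forced by the triples $\{r,x,y\}$ with $x\in U_C$, $y\in U_I$ --- putting $xy\in F$ exactly when $rxy\in E(G)$. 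With these choices, every triple inside $U_C$, every triple inside $U_I$, and every triple through $r$ is automatically consistent: a triple $\{r,x,y\}$ with $x,y\in U_C$ is an edge of $G$ (as $C_r$ is a clique of $L(r)$) and has $e_F=2+[xy\in F]\ge 2$; one with $x,y\in U_I$ is a non-edge and has $e_F=[xy\in F]\le 1$; and the mixed case holds by the forced definition of $xy\in F$.

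The crux --- and the step I expect to be the real obstacle --- is the consistency of the triples that cross $(C_r,I_r)$ without using $r$: for $x,y\in U_C$ and $z\in U_I$ one needs $xyz\in E(G)$ iff $[xy\in F_C]+[rxz\in E(G)]+[ryz\in E(G)]\ge 2$, and symmetrically with the roles of the two sides exchanged. These identities do not hold for arbitrary recursive solutions, for two related reasons: $F_C$ (resp.\ $F_I$) is determined by $G[U_C]$ only up to flipping edges between twins, so its individual edges $xy$ carry no intrinsic meaning; and even once $F_C$ is pinned down, the identity couples the two subproblems together with the vertex $r$. I would attack this by reading each crossing identity inside the split link $L(z)$ of the lone vertex $z$ (and inside $L(r)$), exploiting the absence of an induced $2K_2$, $C_4$ or $C_5$ to constrain the structure. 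Concretely I expect one must (i) strengthen the induction hypothesis so that the returned graph $F_C$ is suitably canonical (removing the twin ambiguity in its edges), and (ii) before gluing, prune $U_C$ and $U_I$ --- discarding any $x\in U_C$ lying on the ``wrong'' side of $L(z)$ for some surviving $z\in U_I$, and vice versa --- until every surviving crossing triple collapses to one of the trivial clique/independent cases already handled above.

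The delicate quantitative point is the cost of this pruning. Balancing that loss against the recursion $|U|\ge 1+|U_C|+|U_I|$ with $|C_r|+|I_r|=n-1$ is what is expected to produce the exponent $\tfrac13$ (the choice of which vertex $r$ and which split partition to process at each stage being made precisely to keep this loss under control), whereas a lossless gluing would give a stronger bound. I therefore regard the clean ``peel off a vertex and recurse on both sides'' skeleton as routine, and the coordination of the two recursive outputs across $r$ --- including the bookkeeping that pins down the exponent --- as the substance of the proof.
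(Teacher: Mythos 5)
Your proposal is not a proof: you yourself flag the consistency of the crossing triples (those with two vertices in $U_C$ and one in $U_I$, or vice versa, not containing $r$) as ``the real obstacle'' and then only describe how you would \emph{attack} it --- via a strengthened, ``canonical'' induction hypothesis and an unspecified pruning step --- without carrying out either. The quantitative side is likewise unestablished: the recursion $|U|\ge 1+|U_C|+|U_I|$ with a lossless glue would give far more than $n^{1/3}$ (since $a^{1/3}+b^{1/3}\ge(a+b)^{1/3}$), so the entire content of the exponent lies in the unanalyzed pruning cost, and nothing in the proposal shows that this cost can be controlled at all. So the skeleton you call routine is indeed routine, but the part you call the substance is missing.

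For comparison, the paper's proof sidesteps the gluing problem entirely with a short global argument. Fix, for each vertex $v$, a split partition $V(G)\setminus\{v\}=A_v\cup B_v$ of its link ($A_v$ independent, $B_v$ a clique). Call a pair $xy$ \emph{bad} if exactly one of $x\in A_y$, $y\in A_x$ holds. A $K_4$ of bad pairs is impossible: orienting $x_i\to x_j$ when $x_j\in A_{x_i}$ gives a tournament, which contains a transitive triangle $x_1\to x_2,x_3$, $x_2\to x_3$, forcing $x_2,x_3\in A_{x_1}$ (so $x_1x_2x_3\notin E(G)$) and $x_1,x_2\in B_{x_3}$ (so $x_1x_2x_3\in E(G)$), a contradiction. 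Since $R(K_4,K_m)\le\binom{m+2}{3}\le m^3$, there is $U$ with $|U|\ge n^{1/3}$ containing no bad pair; on $U$ one sets $xy\in E(F)$ iff $x\in B_y$ and $y\in B_x$, and consistency of every triple is then immediate (two $F$-edges at a vertex $x$ put the other two vertices in the clique $B_x$; two $F$-non-edges put them in the independent set $A_x$). Note that the exponent $1/3$ comes from the off-diagonal Ramsey bound, not from any recursive trade-off. If you want to salvage your approach, the paper's notion of a \emph{good} pair is exactly the canonical, twin-free information your step (i) is asking for; but at that point the recursion becomes unnecessary.
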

	\begin{proof}
		By assumption, for each $v \in V(G)$, there is a partition $V(G) \setminus \{v\} = A_v \cup B_v$ such that $A_v$ is independent in the link of $v$ and $B_v$ is a clique in the link of $v$. Call a pair of vertices $xy$ {\em good} if $x \in A_y \Longleftrightarrow y \in A_x$ (i.e., $x \in A_y$ and $y \in A_x$, or $x \in B_y$ and $y \in B_x$). Otherwise, call $xy$ {\em bad}. We first claim that there is no $K_4$ consisting of bad pairs. Suppose otherwise; let $x_1,x_2,x_3,x_4$ such that $x_ix_j$ is bad for every $1 \leq i < j \leq 4$. Orient the edges $x_ix_j$ by letting $x_i \rightarrow x_j$ if $x_j \in A_{x_i}$ (note that exactly one of $x_j \in A_{x_i}$, $x_i \in A_{x_j}$ holds). Every tournament on 4 vertices has a transitive tournament on 3 vertices. Hence, without loss of generality, we can assume that $x_1 \rightarrow x_2,x_3$ and $x_2 \rightarrow x_3$. It follows that $x_2,x_3 \in A_{x_1}$, and hence $x_1x_2x_3 \notin E(G)$. On the other hand, $x_1,x_2 \in B_{x_3}$, and hence $x_1x_2x_3 \in E(G)$, giving a contradiction.
		
		We showed that the graph of bad pairs has no $K_4$. As $R(K_4,K_m) \leq \binom{m+2}{3} \leq m^3$, there is a set $U \subseteq V(G)$, $|U| \geq n^{1/3}$, such that $U$ contains no bad pairs. Now define a graph $F$ on $U$ by letting $xy$ be an edge of $F$ if $x \in B_y$ and $y \in B_x$ (so $xy$ is not an edge of $F$ if $x \in A_y$ and $y \in A_x$). Let $x,y,z \in U$ be distinct. If $xy,xz \in E(F)$, then $y,z \in B_x$, so $xyz \in E(G)$. Similarly, if $xy,xz \notin E(F)$, then $y,z \in A_x$ and so $xyz \notin E(F)$. This proves the proposition.  
	\end{proof}

\bibliographystyle{plain} 
\bibliography{library}

\end{document}